\documentclass[12pt,a4paper,english,draft]{smfart}
\usepackage{amssymb,amsmath,stmaryrd,vmargin,smfthm}
\usepackage[english]{babel}
\usepackage[all]{xy}
\theoremstyle{plain}


\def \demdu#1 { {\sl Proof #1.} }

\def \Gal {\text{\rm Gal}}

\def \tor {\text {\rm Tor}}

\def \limpro{\lim\limits_{\leftarrow} }
\def \limind{ \lim\limits_{\rightarrow} }

\def \C {\overline {C}}
\def \U {\overline {U}}

\def \F#1 {\overline {F^\times_{#1}}}



\def\CM{{\mathbb{C}}}

\def\FM{{\mathbb{F}}}

\def\NM{{\mathbb{N}}}

\def\QM{{\mathbb{Q}}}

\def\ZM{{\mathbb{Z}}}




\def\al{\alpha}

\def\ga{\gamma}
\def\Ga{\Gamma}
\def\de{\delta}

\def\La{\Lambda}

\def\si{\sigma}

\def\ze{\zeta}








\def\Cti{{\widetilde{C}}}

\def\Wti{{\widetilde{W}}}


\def\Hha{{\widehat{H}}}



\def\uba{{\overline{u}}}

\def\Cba{{\overline{C}}}

\def\Uba{{\overline{U}}}

\def\Wba{{\overline{W}}}

\def\Gal{\mathop{\mathrm{Gal}}\nolimits}

\def\Im{\mathop{\mathrm{Im}}\nolimits}

\def\Ker{\mathop{\mathrm{Ker}}\nolimits}
\def\ker{\Ker}



\providecommand{\bysame}{\leavevmode\hbox to3em{\hrulefill}\thinspace}
\providecommand{\MR}{\relax\ifhmode\unskip\space\fi MR } 
\providecommand{\MRhref}[2]{}
\providecommand{\href}[2]{#2}
\def\Dbar{\leavevmode\lower.6ex\hbox to 0pt{\hskip-.23ex
    \accent"16\hss}D}
\def\cfac#1{\ifmmode\setbox7\hbox{$\accent"5E#1$}\else
    \setbox7\hbox{\accent"5E#1}\penalty 10000\relax\fi\raise 1\ht7
\hbox{\lower1.15ex\hbox to 1\wd7{\hss\accent"13\hss}}\penalty 10000
\hskip-1\wd7\penalty 10000\box7}
\def\cftil#1{\ifmmode\setbox7\hbox{$\accent"5E#1$}\else
    \setbox7\hbox{\accent"5E#1}\penalty 10000\relax\fi\raise 1\ht7
\hbox{\lower1.15ex\hbox to 1\wd7{\hss\accent"7E\hss}}\penalty 10000
\hskip-1\wd7\penalty 10000\box7}
 \def\Dbar{\leavevmode\lower.6ex\hbox to
0pt{\hskip-.23ex \accent"16\hss}D}
  \def\cfac#1{\ifmmode\setbox7\hbox{$\accent"5E#1$}\else
  \setbox7\hbox{\accent"5E#1}\penalty 10000\relax\fi\raise 1\ht7
  \hbox{\lower1.15ex\hbox to 1\wd7{\hss\accent"13\hss}}\penalty 10000
\hskip-1\wd7\penalty 10000\box7}
\def\cftil#1{\ifmmode\setbox7\hbox{$\accent"5E#1$}\else
  \setbox7\hbox{\accent"5E#1}\penalty 10000\relax\fi\raise 1\ht7
  \hbox{\lower1.15ex\hbox to 1\wd7{\hss\accent"7E\hss}}\penalty 10000
\hskip-1\wd7\penalty 10000\box7}


\def\Wind{\underset {\rightarrow \infty} {\Wti}}
\def\Cind{\underset {\rightarrow \infty} {\Cti}}
\providecommand{\href}[2]{#2}
\begin{document}
\title{Asymptotic cohomology of circular units}
\author{Jean-Robert Belliard}
\address{Laboratoire de math\'ematiques de Besan\c con UMR 6623,
Universit\'e de Franche-Comt\'e,
16 route de Gray,
25030 Besan\c con cedex,
FRANCE}
\email{jean-robert.belliard@univ-fcomte.fr}
\date{14th November 2007}
\frontmatter

\begin{abstract}
Let $F$ be a number field, abelian over $\QM$,  and fix a prime $p\neq 2$.
Consider the cyclotomic $\ZM_p$-extension $F_\infty/F$ and denote $F_n$ the ${n}^{\rm th}$ finite subfield and $C_n$ its group of circular units. Then the Galois groups $G_{m,n}=\Gal(F_m/F_n)$ act naturally on the $C_m$'s (for any $m\geq n>> 0$). We compute the Tate cohomology groups $\Hha^i(G_{m,n}, C_m)$ for $i=-1,0$ without assuming anything else neither on $F$ nor on $p$.
\end{abstract}

\subjclass{11R23,11R18,11R34}
\thanks{This paper was written while I was hosted by the CNRS at the Institut de Math\'ematiques de Bordeaux. I thank both these institutions for their hospitality.}

\maketitle
\tableofcontents

\mainmatter
\section*{Introduction}
Let $F$ be a number field, abelian over $\QM$,  and fix a prime $p\neq 2$.
Consider the cyclotomic $\ZM_p$-extension $F_\infty/F$ and denote $F_n$ its ${n}^{\rm th}$ finite layer, i.e. $F_n/F$ is the unique sub-extension of degree $p^n$ of $F_\infty/F$. Following Sinnott (\cite{Si80}) we define, for every $F_n$, its group of circular units $C_n$. Then the cyclic Galois groups $G_{m,n}=\Gal(F_m/F_n)$ act naturally on the $C_m$'s (for any $m\geq n\geq 0$). The present note aims to compute the Tate cohomology groups $\Hha^i(G_{m,n}, C_m)$ for $i=-1,0$ without assuming anything else on $F$.  To complete our computations in this general case, we need to take into account differences between Sinnott and Washington versions of circular units, even at the infinite level. In the process, as it does not involve any other difficulties, we compute the cohomology of this second version, and also the $\Ga_n=\Gal(F_\infty/F_n)$-cohomology of both inductive limits.
Unfortunately we only succeed in describing these cohomology groups by assuming that $n$ is not less than some natural number $N$ (see lemma \ref{cond1}, lemma \ref{cond2} and theorem \ref{asy}), hence the word "asymptotic" in the title.
On the other hand if we do assume {\it ad hoc} hypotheses on $F$, then the natural number $N$ may be chosen to be equal to 0, and our general approach recovers all previously known special cases (see e.g. articles \cite{Kim92,Kim95,Kim99,KO01}, and \S III.4 of \cite{JNT1}, and \S 4 of \cite{NG06}).
\section{Two versions of cyclotomic units}
We start by recalling the definitions of Sinnott's and Washington's circular units. Fix an embedding of $\overline{\QM}$ in $\CM$ and denote $\ze_m=\exp(2i\pi/m)$.
\begin{defi}\label{circunits}
Let $F$ be an abelian number field and $f$ be its conductor, that is the smallest integer such that $F\subset \QM(\ze_f)$. The group of units of $F$ will be denoted $U(F)$.
\begin{enumerate}
\item We call group of circular numbers of $F$ and denote ${\mathrm Cyc}(F)$ the Galois submodule of $F^\times$ generated by $-1$ and the numbers
$N_{\QM(\ze_d) /\QM(\ze_d)\cap F} (1-\ze_d)$, where $d$ runs through all divisors of $f$ greater than 1.
\item We call Sinnott's group of circular units of $F$ (see \cite{Si80}) and denote $C(F)$ the intersection
$$C(F)= \mathrm{Cyc}(F) \bigcap U(F).$$
\item We call Washington's group of circular units of $F$ (see p.143 of \cite{Wa}) and denote $W(F)$ the intersection
$$W(F)= \mathrm{Cyc}(\QM(\ze_f)) \bigcap U(F).$$
\end{enumerate}
\end{defi}
Fix now a prime number $p\neq 2$ and consider the cyclotomic $\ZM_p$-extension $F_\infty/F$ of \underline{any abelian} number field $F$. Then all finite layers $F_n$ are still abelian over $\QM$ and the notations $C_n=C(F_n)$ and $W_n=W(F_n)$ make sense. We also abbreviate $U(F_n)$ to $U_n$. When necessary we will rather consider the pro-$p$-completions $\C(F)=C(F)\otimes \ZM_p$, $\Wba_n=W_n \otimes \ZM_p$, $\Cba_n=C_n\otimes \ZM_p$ and $\U_n=U_n\otimes \ZM_p$. We put as usual $\Ga=\Gal(F_\infty/F)$ and denote $\La=\ZM_p[[\Ga]]$ for the Iwasawa algebra.
For all $n$ we have obvious inclusions $C_n\subset W_n$. Counterexamples to the equality  are known. Moreover there exist also abelian number fields $F$ such that even the projective limits $\C_\infty =\limpro \C_n$ and $\Wba_\infty = \limpro \Wba_n$ with respect to norm maps along $F_\infty/F$ disagree (see e.g. \cite{pmb}; other examples were built simultaneously and independently by Ku{\v{c}}era in \cite{Kuc03}).
\begin{defi}\label{KN} Let us denote $KN_\infty$ the quotient module $KN_\infty=\Wba_\infty /\C_\infty$. This module has been defined as a capitulation kernel for unit classes and called "Ku{\v{c}}era--Nekov\' a\v r kernel" in \cite{BN2} and \cite{BNL}.
\end{defi}
Let $\U_\infty$ be the projective limit of the $\U_n$. Let $\overline{\mu}_n$ be the $p^{\mathrm{th}}$-power roots of unity in $F_n$ and  $\overline{\mu}_\infty$ be their projective limit. Let $F^+$ be the maximal real subfield of $F$, and for any $p$-adic $\Gal(F/F^+)$-module
$M$ let us abbreviate $M^{\Gal(F/F^+)}$ to $M^+$. We have $\overline{\mu}_\infty\subset \C_\infty\subset \Wba_\infty \subset \U_\infty$ and with $M$ standing for any of these four modules of units we have a direct sum decomposition  $M=M^+\oplus \overline{\mu}_\infty$.
For later use, we summarize some other known properties of these modules in a proposition.
\begin{prop}\label{Cinf}\ Let $r=[F^+:\QM]$.
\begin{enumerate}
\item The module $KN_\infty$ is finite.
\item The module $\Wba_\infty^+$ is $\La$-free of rank $r$.
\item The $\La$-module $\C_\infty^+$ is of rank $r$.
\item The module $\C_\infty^+$ is $\La$-free if and only if $KN_\infty=0$.
\item The module $KN_\infty$ is the maximal finite submodule of $\U_\infty/\C_\infty=\U_\infty^+/\C_\infty^+$.
\end{enumerate}
\end{prop}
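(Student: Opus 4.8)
The plan is to treat the five assertions more or less in the order they relate to one another, using the well-known structure of $\Wba_\infty^+$ as the backbone. First I would establish (2): the standard description of Washington units at the finite level (via the cyclotomic units of $\QM(\ze_f)$ intersected with the units of $F_n$, i.e. the Sinnott/Washington index computations) shows that for $n\gg 0$ the module $\Wba_n^+$ is, up to controlled $\ZM_p$-torsion coming from roots of unity, a permutation-type module over $\ZM_p[G_{n,0}]$ of rank $r$; passing to the projective limit along the norm maps and using that the $\mu$- and $\la$-invariants are as expected, one gets that $\Wba_\infty^+$ is $\La$-free of rank $r$. (This is essentially classical — it is the input one always uses for Washington units — so I would cite the relevant reference and Kolster/Kucera-type computations rather than redo it.) Assertion (3) then follows from (2) together with the fact, proved by comparing the two versions of cyclotomic units at each finite layer, that the quotient $\Wba_n^+/\Cba_n^+$ is finite of bounded order for $n\gg 0$, so that $\C_\infty^+$ has the same $\La$-rank $r$ as $\Wba_\infty^+$.

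For (1), the key is to identify $KN_\infty=\Wba_\infty^+/\C_\infty^+$ as a pseudo-null $\La$-module that is moreover finite. Pseudo-nullity is clear because both modules have rank $r$; finiteness requires ruling out a nonzero finite-index-in-codimension-one phenomenon, and the cleanest route is the capitulation-kernel interpretation mentioned in Definition \ref{KN}: $KN_\infty$ sits inside a projective limit of finite class-group-type modules with trivial $\La$-action growth, forcing it to be finite. I would invoke \cite{BN2}, \cite{BNL} here. Granting (1) and (2), assertion (4) is then a short homological argument: from the exact sequence $0\to\C_\infty^+\to\Wba_\infty^+\to KN_\infty\to 0$ with $\Wba_\infty^+$ free, one computes that $\C_\infty^+$ is free if and only if $KN_\infty$ has projective dimension $\le 1$ over $\La$; but a nonzero finite $\La$-module has projective dimension $2$ (its depth is $0$ while $\La$ has dimension $2$, via Auslander–Buchsbaum), so freeness of $\C_\infty^+$ is equivalent to $KN_\infty=0$.

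Finally, for (5) I would first note the direct-sum decompositions $M=M^+\oplus\overline\mu_\infty$ recalled just before the proposition, which give $\U_\infty/\C_\infty=\U_\infty^+/\C_\infty^+$ immediately. To see that $KN_\infty$ is exactly the maximal finite submodule of $\U_\infty^+/\C_\infty^+$: the inclusion $\C_\infty^+\subset\Wba_\infty^+\subset\U_\infty^+$ exhibits $KN_\infty$ as a finite submodule by (1), so it is contained in the maximal finite submodule; conversely one uses that $\U_\infty^+/\Wba_\infty^+$ injects (up to the usual pseudo-null discrepancy) into a module with no nonzero finite submodule — here the relevant fact is that $\U_\infty^+$ is $\La$-torsion-free and $\Wba_\infty^+$ is $\La$-free, so $\U_\infty^+/\Wba_\infty^+$ has no nonzero finite submodule by the freeness of $\Wba_\infty^+$ together with a standard argument on reflexive modules (a quotient of a free module by a free submodule of the same rank embeds in a torsion module whose finite submodule is controlled by $\Ext^1$; with $\Wba_\infty^+$ free this $\Ext$ vanishes in the relevant degree). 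Hence any finite submodule of $\U_\infty^+/\C_\infty^+$ already lies in $\Wba_\infty^+/\C_\infty^+=KN_\infty$, giving equality.

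The main obstacle I anticipate is (1)/(5): proving \emph{finiteness} of $KN_\infty$ (as opposed to mere pseudo-nullity) and, correspondingly, showing $\U_\infty^+/\Wba_\infty^+$ has no nonzero finite submodule. Pseudo-nullity is formal from rank considerations, but finiteness genuinely uses the arithmetic meaning of the quotient (the capitulation/Kucera–Nekov\'a\v{r} description) and is exactly the place where the examples of \cite{pmb} and \cite{Kuc03} show the module can be nonzero — so the argument must be robust enough to bound it without forcing it to vanish. Everything else is either classical Iwasawa structure theory for cyclotomic units or routine homological algebra over the two-dimensional regular local-ish ring $\La$.
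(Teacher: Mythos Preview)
Your overall strategy is sound and (3), (4), (5) are handled correctly---your homological argument for (4) via Auslander--Buchsbaum is a clean alternative to the paper's citation of \cite{pmb}, and your reflexivity argument for (5) is essentially the paper's (the paper simply notes that both $\U_\infty^+$ and $\Wba_\infty^+$ are $\La$-free, hence $\Wba_\infty^+$ is reflexive inside the torsion-free $\U_\infty^+$, so the quotient has no nonzero finite submodule).

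The one real mismatch is (1). You already invoke, in your discussion of (3), the fact that $\Wba_n^+/\Cba_n^+$ is finite of \emph{uniformly bounded} order---this is precisely the main theorem of Ku\v{c}era--Nekov\'a\v{r} \cite{KN95}. But then for (1) you abandon this and retreat to the capitulation-kernel interpretation, calling finiteness ``the main obstacle.'' In fact the paper's proof of (1) is a one-liner from the bounded-index result you already quoted: since projective limits are left exact, $KN_\infty=\Wba_\infty/\Cba_\infty$ injects into $\varprojlim_n \Wba_n/\Cba_n$, which is a projective limit of finite groups of uniformly bounded order and hence is itself finite. So you have the key input in hand but do not use it where it matters most; there is no obstacle. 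For (2) the paper likewise just cites a theorem of Kuz$'$min \cite{Ku96} rather than reconstructing the module structure by hand.
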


\begin{proof} The indices $(W_n:C_n)$ are bounded uniformly in $n$, this is the main result of \cite{KN95}. Assertion 1 follows.  Assertion 2 is a theorem of Kuz$'$min (see \cite{Ku96} or discussion before proposition 3.6 of \cite{pmb}). Assertion 3 is a consequence of 1 and 2. Assertion 4 is proposition 3.6 of \cite{pmb} (see also \cite{BN2}, proposition 2.3 $(ii)$). Let us denote $MF_\infty$ for the maximal finite submodule of $\U_\infty/\C_\infty$. By assertion 1 we already have an inclusion
$KN_\infty \subset MF_\infty$. But since both $\U_\infty^+$ and $\Wba_\infty^+$ are $\La$-free the quotient $\U_\infty /\Wba_\infty$ has no non-trivial finite submodule. This shows assertion 5.\end{proof}
In \cite{BN2} (proposition 2.3 $(ii)$) the module denoted $KN_F$ was shown to be equal to $MF_\infty$, so that notations are coherent. For our present purpose we will make no use of the interpretation in terms of capitulation kernels. In the sequel we will consider this finite module $KN_\infty$ as a parameter depending on the base field $F$.
As written above, two different infinite families of couples $(F,p)$ giving non-trivial $KN_\infty$ are described in
\cite{Kuc03} and \cite{pmb}. On the other hand the definition 6.4, together with theorem 6.6 in the appendix to \cite{BNL}  gives a criterion for the triviality of $KN_\infty$. Up to now all known examples of trivial $KN_\infty$ satisfy this criterion.

\section{Universal norms of circular units}

To compute the cohomology of $C_m$ and $W_m$ we first use the simpler structure of $\Wba_\infty^+$ and of $\C_\infty^+$ to recover the cohomology of the "universal norms" modules
$\Cti_n^+\subset \C_n$ and $\Wti_n^+\subset \Wba_n$, which are defined as the images of the usual projection maps. Then in section \ref{coh} we will control the deviation between the initial modules and their universal norms.
\begin{defi} Let $n\in \NM$.
\begin{enumerate}
\item Let $\Cti_n=\Im(\C_\infty \longrightarrow \C_n)$ be the module consisting of the universal norms of Sinnott's units.
\item Let $\Wti_n=\Im(\Wba_\infty \longrightarrow \Wba_n)$ be the module consisting of the universal norms of Washington's units.
\end{enumerate}
\end{defi}
By a usual compactness argument we have e.g. $\Cti_n=\bigcap_{m\geq n} N_{F_m/F_n} (\Cba_m)$ hence the terminology "universal norms".

In the sequel we will obtain asymptotic results but will try to collect as much information as possible about the first layer $F_n$ from which our results apply. For the cohomology of $\Wti_n$, this is easier. It is well known that there exists some integer $n$ such that $F_\infty/F_n$ is totally ramified at every place above $p$.
\begin{defi}\label{nd} We will denote $n_d$ the smallest integer such that no place above $p$ do splits anymore in $F_\infty/F_{n_d}$.\end{defi}
If $p$ is (at most) tamely ramified in $F/\QM$, then $n_d=0$.
For all $m\geq n \geq 0$ we will denote $\Gal(F_\infty/F_n)$ by $\Ga_n$ and $\Gal(F_m/F_n)\cong \Ga_n/\Ga_m$ by $G_{m,n}$.
By the mere definition of Galois action on places, the group $\Ga_{n_d}$ acts trivially on the set $S_\infty$ of places of $F_\infty$ dividing $p$.

For later use we will isolate in a lemma here a precise statement for our context of the well known snake lemma :
\begin{lemm}\label{Sn}
For all $m\in \NM$ and all $\La$-module $M$ let us denote by $M^{\Ga_m}$ the $\Ga_m$-invariant submodule of $M$ and by $M_{\Ga_m}$ the $\Ga_m$-coinvariant quotient module of $M$. Let $0\rightarrow A\rightarrow B\overset \varphi \rightarrow C\rightarrow 0$ be an exact sequence of $\La$-module. Pick for all $m$ a topological generator $\ga_m$ of $\Ga_m$. Then for all $m\in \NM$ the map $\de\colon C^{\Ga_m}\longrightarrow A_{\Ga_m}$ is well defined by the formula $\de(\varphi(b))=(\ga_m-1) b + (\ga_m-1) A$ and fits into the exact sequence
$$0\longrightarrow A^{\Ga_m} \longrightarrow B^{\Ga_m} \longrightarrow C^{\Ga_m} \overset \de \longrightarrow
A_{\Ga_m} \longrightarrow B_{\Ga_m} \longrightarrow C_{\Ga_m}\longrightarrow 0.$$
\end{lemm}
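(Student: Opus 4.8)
The plan is to recognise Lemma \ref{Sn} as the snake lemma applied to a single endomorphism of the given short exact sequence, once the functors $M\mapsto M^{\Ga_m}$ and $M\mapsto M_{\Ga_m}$ have been identified with the kernel and cokernel of multiplication by $\ga_m-1$.

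First I would record the elementary fact that, since $F_\infty/F$ is the cyclotomic $\ZM_p$-extension, each $\Ga_m=\Gal(F_\infty/F_m)$ is topologically procyclic with topological generator $\ga_m$, and the augmentation ideal of $\ZM_p[[\Ga_m]]$ is generated by $\ga_m-1$. Consequently, for every $\La$-module $M$ one has $M^{\Ga_m}=\ker(\ga_m-1\colon M\to M)$ and $M_{\Ga_m}=M/(\ga_m-1)M=\coker(\ga_m-1\colon M\to M)$.

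Next I would write down the commutative diagram with exact rows
$$\xymatrix{
0 \ar[r] & A \ar[r] \ar[d]^{\ga_m-1} & B \ar[r]^{\varphi} \ar[d]^{\ga_m-1} & C \ar[r] \ar[d]^{\ga_m-1} & 0 \\
0 \ar[r] & A \ar[r] & B \ar[r]^{\varphi} & C \ar[r] & 0
}$$
whose vertical maps are multiplication by $\ga_m-1$; they commute with the $\La$-linear horizontal maps, so the diagram is legitimate. The snake lemma then yields the six-term exact sequence relating the kernels and cokernels of the vertical arrows, which by the previous paragraph reads exactly
$$0\longrightarrow A^{\Ga_m}\longrightarrow B^{\Ga_m}\longrightarrow C^{\Ga_m}\overset{\de}{\longrightarrow}A_{\Ga_m}\longrightarrow B_{\Ga_m}\longrightarrow C_{\Ga_m}\longrightarrow 0.$$

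Finally I would check that the connecting map $\de$ has the asserted form, which is merely unwinding the definition of the snake morphism: given $c\in C^{\Ga_m}$, choose $b\in B$ with $\varphi(b)=c$; then $\varphi((\ga_m-1)b)=(\ga_m-1)c=0$, so $(\ga_m-1)b$ lies in $A=\ker\varphi$, and its class modulo $(\ga_m-1)A$ is by construction $\de(c)$, i.e. $\de(\varphi(b))=(\ga_m-1)b+(\ga_m-1)A$; independence of the choice of lift $b$ is the usual verification. There is no serious obstacle here — the statement is entirely formal — and the only point deserving a word of care is the identification of $\Ga_m$-(co)invariants with $\ker/\coker$ of $\ga_m-1$, which is immediate from procyclicity of $\Ga_m$.
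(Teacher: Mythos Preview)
Your proof is correct and follows exactly the approach of the paper, which simply says to apply the snake lemma to the given short exact sequence connected to itself by multiplication by $\ga_m-1$. You have merely spelled out in more detail the identification of invariants and coinvariants with kernel and cokernel of $\ga_m-1$, and unwound the connecting homomorphism, all of which the paper leaves implicit.
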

\begin{proof} Just apply snake lemma to the sequence $0\rightarrow A\rightarrow B\overset \varphi \rightarrow C\rightarrow 0$ connected with itself by multiplication by $\ga_m-1$.
\end{proof}
\begin{prop}\label{CoWti} Let $s^+=\# S_\infty^+$ be the number of places of $F^+_\infty$ dividing $p$. Let $n_d$ be as in definition \ref{nd}. Then for all $m\geq n\geq n_d$, we have
$\Hha^0(G_{m,n},\Wti_m)=0$ and $H^1(G_{m,n},\Wti_m)$ is free of rank $s^+$ over $\ZM/p^{m-n}\ZM$.
\end{prop}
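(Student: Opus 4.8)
The plan is to reduce everything to the known structure of $\Wti_n^+$ and the semilocal units at $p$, using the decomposition $M = M^+ \oplus \overline\mu_\infty$ (so the cohomology of $\Wti_m$ equals that of $\Wti_m^+$, since $p\neq 2$ kills the contribution of the odd part... actually $\overline\mu_\infty$ is finite here, so its $G_{m,n}$-cohomology needs a separate quick check — I would note $\overline\mu_m$ is cyclic and $G_{m,n}$ acts on it through the cyclotomic character, which is trivial on $\Ga_{n_d}$, hence the Herbrand quotient is $1$ and $\Hha^0(G_{m,n},\overline\mu_m)=\Hha^{-1}(G_{m,n},\overline\mu_m)$ is cyclic; but in fact one should argue $\overline\mu_\infty$ contributes to neither $\Hha^0$ nor $H^1$ in the stated degrees, or absorb it into the $s^+$ count — this bookkeeping is the one genuinely fiddly point). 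First I would recall from Proposition~\ref{Cinf}(2) that $\Wba_\infty^+$ is $\La$-free of rank $r$. Since $\Wti_n^+$ is by definition the image of $\Wba_\infty^+$ in $\Wba_n^+$, and $\Wba_\infty^+$ is $\La$-free, the cokernel $\Wba_n^+/\Wti_n^+$ embeds in $H^1(\Ga_n,\Wba_\infty^+)$, which vanishes because $\Wba_\infty^+$ is cohomologically trivial for $\Ga_n$ (being free over $\La = \ZM_p[[\Ga]]$, whose relevant cohomology over the procyclic $\Ga_n$ is controlled). More usefully: $\Wti_n^+ = (\Wba_\infty^+)_{\Ga_n}$ modulo its (trivial) finite submodule, so $\Wti_n^+ \cong \La_n^{\,r}$ where $\La_n = \ZM_p[G_{n,0}]$ — i.e. $\Wti_n^+$ is free over $\ZM_p[\Ga_{0}/\Ga_n]$ of rank $r$, hence free over $\ZM_p[G_{m,n}]$-related group rings after the obvious adjustments.

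The core computation is then: for $m \geq n \geq n_d$, compute $\Hha^i(G_{m,n}, \Wti_m^+)$ where $\Wti_m^+$ is a quotient of the $\La$-free module $\Wba_\infty^+$ of rank $r$. The key exact sequence is
$$0 \longrightarrow \Wti_m^{+} \longrightarrow \Wba_\infty^+ \overset{\ga_m - 1}{\longrightarrow} \Wba_\infty^+ \longrightarrow \Wti_m^{+} \longrightarrow 0$$
modulo finite submodules (or more cleanly, pick $\ga_n$ a topological generator of $\Ga_n$ and use that $\Wti_m^+ = \Wba_\infty^+/(\ga_m-1)$). Applying $\Hha^\bullet(G_{m,n},-)$ and using that $\Wba_\infty^+$ restricted to $\Ga_n$ is still $\La'$-free (over $\ZM_p[[\Ga_n]]$) of rank $r$, hence cohomologically trivial, a dimension-shift/snake argument (Lemma~\ref{Sn} applied with the group $G_{m,n}$ in place of $\Ga_m$, or directly the long exact sequence) should give that $\Hha^0(G_{m,n},\Wti_m^+)$ and $H^1(G_{m,n},\Wti_m^+)$ are computed from the action of $\ga_m - 1 = $ (a unit)$\cdot$(norm-type element) on $(\Wba_\infty^+)^{G_{m,n}} = (\Wba_\infty^+)/\ldots$. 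Here is where $n \geq n_d$ enters decisively: beyond $n_d$ the semilocal structure of $\Wba_\infty^+$ at $p$ is that of an induced/free module on the $s^+$ places, each totally ramified, so the relevant "trace ideal" cokernel has the stated shape. I expect $\Hha^0 = 0$ to fall out because $\Wti_m^+$ has no $G_{m,n}$-coinvariant torsion of the right kind (the norm is surjective onto $\Wti_n^+$ essentially by construction of universal norms — indeed $N_{F_m/F_n}\Wti_m^+ = \Wti_n^+$ since both are images of $\Wba_\infty^+$), giving $\Hha^0 = \Wti_n^+/N\Wti_m^+ = 0$.

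For $H^1(G_{m,n},\Wti_m^+) = \Hha^{-1}(G_{m,n},\Wti_m^+) = \{x \in \Wti_m^+ : N x = 0\}/(\ga-1)\Wti_m^+$: I would identify this with the $(\ga_m-1)$-torsion of $\Wba_\infty^+/(\ga_n - 1)$ modulo $(\ga_n-1)$-action, and using $\La$-freeness write $\Wba_\infty^+ \cong \La^r$, reducing to a pure commutative-algebra computation of $H^1(G_{m,n}, \La^r/(\ga_n-1)\La^r)$. The rank-$s^+$ assertion (rather than rank $r$) is the subtle output and must come from the fact that the cohomology is supported only at the ramified primes: I would invoke that $\Wba_\infty^+$, as a $\La$-module with its semilocal completion, fits in an exact sequence relating it to $\bigoplus_{v \in S_\infty^+}$ (local units), and the global/regulator part is cohomologically trivial while each local factor contributes a cyclic $\ZM/p^{m-n}\ZM$. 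The hard part will be pinning down precisely this localization-at-$p$ input and verifying the freeness (not just the order $p^{(m-n)s^+}$) of $H^1$ — concretely, showing $H^1(G_{m,n},\Wti_m^+)$ is $\ZM[G_{m,n}]$-free, which I would get by exhibiting it as $\coker$ of an injective map between free $\ZM_p[G_{m,n}]$-modules (via the four-term sequence above), since the cokernel of a norm map on an induced module over the $S_\infty^+$-set is induced, hence free over $\ZM/p^{m-n}\ZM$ of rank $s^+$.
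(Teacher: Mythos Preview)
Your proposal contains a genuine gap that undermines the whole computation. You assert that $\Wti_m^+ = (\Wba_\infty^+)_{\Ga_m}$ ``modulo its (trivial) finite submodule'', and then deduce that $\Wti_m^+$ is free of rank $r$ over $\ZM_p[G_{m,0}]$. But this identification is \emph{false}: the natural surjection $(\Wba_\infty^+)_{\Ga_m}\twoheadrightarrow \Wti_m^+$ has a kernel $T_m$ which is neither trivial nor finite. A rank count already shows this: $(\Wba_\infty^+)_{\Ga_m}$ has $\ZM_p$-rank $rp^m$, while $\Wti_m^+\subset \Uba_m^+$ has $\ZM_p$-rank $rp^m-s_m^+$, so $T_m$ has $\ZM_p$-rank $s_m^+$. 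In fact, if your identification were correct, $\Wti_m^+$ would be $G_{m,n}$-cohomologically trivial and you would obtain $H^1(G_{m,n},\Wti_m)=0$, contradicting the very statement you are trying to prove. You notice the tension yourself (``rank $s^+$ rather than rank $r$'') but the semilocal hand-waving at the end does not supply the missing mechanism: the $s^+$ does not come from a localization of $\Wba_\infty^+$ at $p$, it comes precisely from the kernel $T_m$ you have discarded.

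The paper's argument proceeds instead by analyzing $T_m$. Since $(\Wba_\infty)_{\Ga_m}$ is $G_{m,0}$-cohomologically trivial (here the $\overline{\mu}_m$ summand is handled cleanly: it is a direct summand of the cohomologically trivial module $(\Wba_\infty)_{\Ga_m}$, so no separate bookkeeping is needed), the exact sequence $0\to T_m\to (\Wba_\infty)_{\Ga_m}\to \Wti_m\to 0$ shifts cohomology once, and everything reduces to showing that $T_m$ is $\ZM_p$-free of rank $s^+$ with trivial $G_{m,n}$-action for $n\geq n_d$. This is achieved by bringing in the $p$-units: from $0\to \Wba_\infty\to \Uba'_\infty\to \Uba'_\infty/\Wba_\infty\to 0$ and Kuz$'$min's theorem that $(\Uba'_\infty)_{\Ga_m}\hookrightarrow \Uba'_m$, one identifies $T_m\cong (\Uba'_\infty/\Wba_\infty)^{\Ga_m}$, and then a further d\'evissage through $\Uba'_\infty/\Uba_\infty$ (which is controlled by the places above $p$, via valuations) gives the required structure of $T_m$. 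Your $\Hha^0$ argument is also incomplete as written: you compute $\Wti_n^+/N_{m,n}\Wti_m^+$, but $\Hha^0 = (\Wti_m^+)^{G_{m,n}}/N_{m,n}\Wti_m^+$, and equating $(\Wti_m^+)^{G_{m,n}}$ with $\Wti_n^+$ presupposes the Galois descent you want to establish.
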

\begin{proof} Fix $m\geq n \geq n_d$. The canonical surjection $\Wba_\infty \longrightarrow \Wti_m$ factors through $w_m\colon (\Wba_\infty)_{\Ga_m} \twoheadrightarrow \Wti_m$. Set $T_m=\ker w_m$. By proposition \ref{Cinf} we know that $(\Wba_\infty)_{\Ga_m}$ is isomorphic to $\overline{\mu}_m \oplus (\Wba^+_\infty)_{\Ga_m}$ with both summands $G_{m,0}$-cohomologically trivial. Therefore the cohomology of
$\Wti_m$ is the cohomology of $T_m$ shifted once. To conclude the proof of proposition \ref{CoWti} it suffices to show that $T_m$ is a free $\ZM_p$-module of rank $s^+$ with trivial action of $G_{m,n}$. For all $n$ let $\U'_n$ denote the pro-$p$-completion of $(p)$-units of $F_n$, and $\U'_\infty$ be their projective limit. By two theorem of Kuz$'$min (see theorems 7.2 and 7.3 in \cite{Ku72}) the module $\U'_\infty$ is $\La$-free and for all $m$ the map $(\U'_\infty)_{\Ga_m} \longrightarrow \U'_m$ is injective.

The sequence
$\xymatrix{0\ar[r] & \Wba_\infty \ar[r] & \U'_\infty \ar[r] &  \U'_\infty/\Wba_\infty\ar[r] &0 }$ gives by lemma \ref{Sn}
$\xymatrix{0\ar[r] & (\U'_\infty/\Wba_\infty)^{\Ga_m} \ar[r] & (\Wba_\infty)_{\Ga_m} \ar[r] & (\U'_\infty)_{\Ga_m}}$ hence an isomorphism $$T_m\cong (\U'_\infty/\Wba_\infty)^{\Ga_m}.$$
This, together with the following lemma \ref{Tm}, completes the proof of the proposition \ref{CoWti}.\end{proof}
\begin{lemm}\label{Tm} Let $s_m^+$ be the cardinality of the set $S_m^+$ of places dividing $p$ in the maximal real subfield $F_m^+$ of $F_m$. Note that for all $m\geq n_d$ the set $S_m^+$ is in bijection with $S_\infty^+$.
\begin{enumerate}
\item The module $\U'_\infty/\U_\infty$ is $\ZM_p$-free and pseudo-isomorphic to $\ZM_p[S_\infty^+]$.
\item For all $m$ the module $(\U'_\infty/\Wba_\infty)^{\Ga_m}$ is $\ZM_p$-free of rank $s^+_m$ with trivial action by $\Ga_{n_d}$.
\end{enumerate}
\end{lemm}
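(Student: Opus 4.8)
The plan is to deduce both statements from the classical exact sequence relating units, $(p)$-units, divisors supported above $p$ and ($p$-)class groups, pushed to the inverse limit along $F_\infty/F$, together with the $\La$-freeness of $\Wba_\infty^+$ and of $\U'_\infty$ recalled above.

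For (1): at each layer $n$ one has the exact sequence
\[
0\longrightarrow \U_n\longrightarrow \U'_n\longrightarrow \DG_n\longrightarrow A_n\longrightarrow A'_n\longrightarrow 0,
\]
where $\DG_n=\bigoplus_{\vG\mid p}\ZM_p$ is the free module on the places of $F_n$ above $p$, $A_n$ (resp.\ $A'_n$) is the $p$-part of the class group (resp.\ of the $S$-class group, $S$ the set of places above $p$), $\U'_n\to\DG_n$ is $x\mapsto(\mathrm{ord}_\vG x)_\vG$ and $\DG_n\to A_n$ sends a divisor to its class. First I would pass to the inverse limit over $n$ along norm maps; all terms are compact, so the derived limit vanishes and $0\to\U_\infty\to\U'_\infty\to\DG_\infty\to A_\infty\to A'_\infty\to0$ is exact. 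For $n\ge n_d$ each place above $p$ is totally ramified in $F_{n+1}/F_n$, hence of residue degree $1$, so the norm $\DG_{n+1}\to\DG_n$ is a bijection of the natural bases; thus $\DG_\infty\cong\ZM_p[S_\infty]$ is $\ZM_p$-free of finite rank with trivial $\Ga_{n_d}$-action, and (as $p\ne2$) $\DG_\infty^+\cong\ZM_p[S_\infty^+]$ is free of rank $s^+$, still with trivial $\Ga_{n_d}$-action — each place of $F_\infty^+$ above $p$ contributing one copy, whether or not it splits in $F_\infty/F_\infty^+$. Since $\overline{\mu}_\infty\subseteq\Wba_\infty\subseteq\U_\infty\subseteq\U'_\infty$ we have $\U'_\infty/\U_\infty=(\U'_\infty)^+/\U_\infty^+$, and the $(+)$-part of the limit sequence identifies it with $\ker(\DG_\infty^+\to A_\infty^+)$, a submodule of the free $\ZM_p$-module $\DG_\infty^+$; hence it is $\ZM_p$-free. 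Its cokernel inside $\DG_\infty^+$ is $Y_\infty^+:=\Im(\DG_\infty^+\to A_\infty^+)$, the $\La$-submodule of $A_\infty^+$ generated by the classes of the places above $p$, which is \emph{finite} (the Gross--Kuz'min phenomenon, a theorem for $F$ abelian over $\QM$, and already contained in \cite{Ku72}). Therefore $\U'_\infty/\U_\infty$ is of finite index in $\DG_\infty^+\cong\ZM_p[S_\infty^+]$, which is (1).

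For (2): again $\U'_\infty/\Wba_\infty=(\U'_\infty)^+/\Wba_\infty^+$. I would apply lemma \ref{Sn} to $0\to\Wba_\infty^+\to(\U'_\infty)^+\to(\U'_\infty)^+/\Wba_\infty^+\to0$: both $\Wba_\infty^+$ (proposition \ref{Cinf}) and $(\U'_\infty)^+$ (\cite{Ku72}) are $\La$-free, so $\ga_m-1$ is a non-zero-divisor on each, their $\Ga_m$-invariants vanish, and lemma \ref{Sn} gives
\[
\big((\U'_\infty)^+/\Wba_\infty^+\big)^{\Ga_m}\ \cong\ \ker\!\Big((\Wba_\infty^+)_{\Ga_m}\longrightarrow\big((\U'_\infty)^+\big)_{\Ga_m}\Big);
\]
as $(\Wba_\infty^+)_{\Ga_m}\cong\ZM_p[\Ga/\Ga_m]^{r}$ is $\ZM_p$-free, so is $\big((\U'_\infty)^+/\Wba_\infty^+\big)^{\Ga_m}$. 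For the rank and the action I would then feed in the filtration $0\to\U_\infty^+/\Wba_\infty^+\to(\U'_\infty)^+/\Wba_\infty^+\to(\U'_\infty)^+/\U_\infty^+\to0$. Here $\U_\infty^+/\Wba_\infty^+=(\U_\infty^+/\C_\infty^+)/KN_\infty$ has, for every $m$, \emph{finite} $\Ga_m$-invariants and coinvariants (equivalently $\Char(\U_\infty^+/\C_\infty^+)$ is prime to $\ga_m-1$). Applying lemma \ref{Sn} to the filtration: since $\big((\U'_\infty)^+/\Wba_\infty^+\big)^{\Ga_m}$ is $\ZM_p$-free, the finite submodule $(\U_\infty^+/\Wba_\infty^+)^{\Ga_m}$ of it vanishes, so $\big((\U'_\infty)^+/\Wba_\infty^+\big)^{\Ga_m}$ embeds with cokernel inside the finite module $(\U_\infty^+/\Wba_\infty^+)_{\Ga_m}$ into $\big((\U'_\infty)^+/\U_\infty^+\big)^{\Ga_m}$. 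By (1) this last module is a finite-index submodule of $\ZM_p[S_\infty^+]^{\Ga_m}$, which is $\ZM_p$-free of rank $\#\{\Ga_m\text{-orbits in }S_\infty^+\}=|S_m^+|=s_m^+$ with trivial $\Ga_{n_d}$-action. A $\ZM_p$-free submodule of finite index in such a module is again free of rank $s_m^+$ with trivial $\Ga_{n_d}$-action; this gives (2).

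The hard part is the two finiteness inputs: (i) that the classes of the places above $p$ generate a finite $\La$-submodule of $A_\infty^+$, and (ii) that $\U_\infty^+/\C_\infty^+$ has finite $\Ga_m$-(co)invariants for all $m$. These are genuine arithmetic content — instances of the Gross--Kuz'min and Leopoldt-type non-vanishing results — and are precisely where the hypothesis that $F$ be abelian over $\QM$ enters; establishing them (and extracting the explicit bounds needed for the later, genuinely asymptotic statements) is, I expect, the real work, the rest being diagram chasing with proposition \ref{Cinf} and \cite{Ku72}.
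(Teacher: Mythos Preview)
Your argument for (2) is essentially the paper's: the same filtration $0\to\U_\infty/\Wba_\infty\to\U'_\infty/\Wba_\infty\to\U'_\infty/\U_\infty\to 0$, the same vanishing of $(\U_\infty/\Wba_\infty)^{\Ga_m}$ (finite invariant submodule of a module with no finite submodule), and the same embedding with finite cokernel into $(\U'_\infty/\U_\infty)^{\Ga_m}$. Your extra step---getting $\ZM_p$-freeness directly from the snake sequence for $0\to\Wba_\infty^+\to(\U'_\infty)^+\to\cdot\to0$---is a harmless variant.

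There is, however, a real gap in (1). You assert $\U'_\infty/\U_\infty=(\U'_\infty)^+/\U_\infty^+$ on the grounds that ``$\overline{\mu}_\infty\subseteq\Wba_\infty\subseteq\U_\infty\subseteq\U'_\infty$'', but that chain does not give this: the decomposition $M=M^+\oplus\overline{\mu}_\infty$ is stated only for $M\in\{\C_\infty,\Wba_\infty,\U_\infty\}$, \emph{not} for $\U'_\infty$, and $(\U'_\infty)^-=\overline{\mu}_\infty$ is precisely what is at stake. After your argument one only knows that $(\U'_\infty/\U_\infty)^+$ sits with finite index inside $\ZM_p[S_\infty^+]$; you have said nothing about $(\U'_\infty/\U_\infty)^-$, which embeds in $\ZM_p[S_\infty]^-$ and the latter is nonzero whenever some prime of $F_\infty^+$ above $p$ splits in $F_\infty$. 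The single arithmetic input you invoke---finiteness of $Y_\infty^+=D_\infty^+$ (which is really a Leopoldt-type statement, not Gross--Kuz'min)---does not control this minus part.

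The paper avoids this by never projecting to the $+$-part prematurely: it embeds $\U'_\infty/\U_\infty$ into the \emph{full} $\ZM_p[S_\infty]$ via valuations (this already gives $\ZM_p$-freeness and trivial $\Ga_{n_d}$-action), and then invokes Greenberg's description of the whole cokernel $D_\infty$ as pseudo-isomorphic to $\ZM_p[S_\infty]^-$, a consequence of Leopoldt's conjecture. That second input is exactly what is missing from your proof; once you add it, $(\U'_\infty/\U_\infty)^-$ is $\ZM_p$-free of rank $0$, hence trivial, and your reduction to the $+$-part becomes legitimate.
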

\begin{proof}
We first prove that $1$ implies $2$, then we will prove $1$.
Apply lemma \ref{Sn} to the sequence
$$\xymatrix{0\ar[r] & \U_\infty/\Wba_\infty \ar[r] & \U'_\infty/\Wba_\infty \ar[r] & \U'_\infty/\U_\infty \ar[r] & 0.}$$ We get $$\xymatrix{0\ar[r] & (\U_\infty/\Wba_\infty)^{\Ga_m} \ar[r] & (\U'_\infty/\Wba_\infty)^{\Ga_m} \ar[r] & (\U'_\infty/\U_\infty)^{\Ga_m} \ar[r] & (\U_\infty/\Wba_\infty)_{\Ga_m}}$$
By lemma \ref{Cinf} the module $\U_\infty/\Wba_\infty$ is pseudo-isomorphic to $\U_\infty/\C_\infty$, and as a consequence of Coleman's theory and Leopoldt's conjecture (see theorem 1.1 of \cite{CJM}) these two modules have finite $\Ga_m$-invariants and co-invariants. But since $\U_\infty^+$ and $\Wba_\infty^+$ are $\La$-free the quotient $\U_\infty/\Wba_\infty$ has no non-trivial finite submodule so that $(\U_\infty/\Wba_\infty)^{\Ga_m}=0$ and our previous sequence becomes $$\xymatrix{0\ar[r]  & (\U'_\infty/\Wba_\infty)^{\Ga_m} \ar[r] & (\U'_\infty/\U_\infty)^{\Ga_m} \ar[r] & {\mathrm{finite}}.}$$
Therefore assertion 2 follows from assertion 1.
To prove $1$ we use (normalized) valuations at places above $p$ and consider the exact sequence~:
$$\xymatrix{0\ar[r] & \U'_\infty/\U_\infty \ar[r]^-{\mathrm{val}}&\ZM_p[S_\infty]\ar[r] & D_\infty\ar[r] & 0,}$$
where $D_\infty$ stands for (the projective limit) of the $p$-part of the subgroup of ideal class groups generated by places above $p$. This shows already that $(\U'_\infty/\U_\infty)$ is $\ZM_p$-free with trivial action of $\Ga_{n_d}$.
Proving that $(\U'_\infty/\U_\infty)$ is pseudo-isomorphic to $\ZM_p[S_\infty^+]$ is then equivalent to prove that $D_\infty$ is pseudo-isomorphic to $\ZM_p[S_\infty]^-=\ZM_p[S_\infty]/\ZM_p[S_\infty^+]$. This last statement is a consequence of Leopoldt conjecture (which holds true for our abelian field $F$) and is part of the folklore. For instance Greenberg in \cite{Gree73} \S1 proves that $D_\infty^+$ is finite and constructs in loc. cit. \S 2 a sequence of subgroups (denoted $C_m$ in loc. cit. p.120) of uniformly bounded finite index in $D_m$, whose projective limit is pseudo-isomorphic to $\ZM_p[S_\infty]^-$.
\end{proof}
\begin{coro} Let $\Wind={\limind}_{m} \Wti_m$ be the inductive limit of the $\Wti_m$'s. For all $n\geq n_d$ we have a (group)-isomorphism $H^1(\Ga_n,\Wind)\simeq (\QM_p/\ZM_p)^{s^+}$ and $H^2(\Ga_n,\Wind)$ is trivial.
\end{coro}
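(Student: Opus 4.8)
The plan is to pass from the finite-level statement of Proposition~\ref{CoWti} to the infinite level by taking a direct limit, keeping careful track of the Galois action and of the transition maps in the direct system $(\Wti_m)_m$. First I would fix $n\geq n_d$ and, for each $m\geq n$, use Proposition~\ref{CoWti} (applied with the pair $(m,n)$) to get $\Hha^0(G_{m,n},\Wti_m)=0$ and $H^1(G_{m,n},\Wti_m)$ free of rank $s^+$ over $\ZM/p^{m-n}\ZM$. Since $G_{m,n}$ is cyclic, Tate cohomology is $2$-periodic, so $\Hha^0=0$ forces $H^2(G_{m,n},\Wti_m)=\Hha^0(G_{m,n},\Wti_m)=0$ as well; combined with $H^1$ being finite this is exactly the input needed to feed a Hochschild--Serre spectral sequence for $1\to G_{m,n}\to \Ga_n\to \Ga_m\to 1$, or more efficiently the inflation--restriction exact sequences in low degree.

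The key technical point is the compatibility of the isomorphisms $H^1(G_{m,n},\Wti_m)\cong \ZM/p^{m-n}\ZM[S^+]$ with the maps of the direct system. Concretely, I would revisit the proof of Proposition~\ref{CoWti}: there $H^1(G_{m,n},\Wti_m)$ was identified (via the shift coming from $(\Wba_\infty)_{\Ga_m}$ being cohomologically trivial) with $\Hha^0(G_{m,n},T_m)=T_m^{G_{m,n}}/N_{G_{m,n}}T_m = T_m/p^{m-n}T_m$, where $T_m\cong (\U'_\infty/\Wba_\infty)^{\Ga_m}$ is $\ZM_p$-free of rank $s^+$ with trivial $\Ga_{n_d}$-action (Lemma~\ref{Tm}). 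The transition map $\Wti_m\to \Wti_{m+1}$ is the natural one coming from $\Wba_\infty\to \Wba_\infty$, hence on the $T$-side is induced by the identity on $\U'_\infty/\Wba_\infty$; passing to $\Ga_m$- versus $\Ga_{m+1}$-invariants it is an isomorphism $T_m\xrightarrow{\sim} T_{m+1}$ of $\ZM_p$-modules (both equal $(\U'_\infty/\Wba_\infty)^{\Ga_{n_d}}$ once $m\geq n_d$, since $\Ga_{n_d}$ acts trivially). Therefore in the direct system the group $H^1(G_{m,n},\Wti_m)=T/p^{m-n}T$ maps to $H^1(G_{m+1,n},\Wti_{m+1})=T/p^{m+1-n}T$ by the inclusion $\ZM/p^{m-n}\hookrightarrow \ZM/p^{m+1-n}$ (multiplication by $p$), so the colimit is $T\otimes \QM_p/\ZM_p\cong (\QM_p/\ZM_p)^{s^+}$.

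Now I would assemble these via the exact sequence relating $\Ga_n$-, $G_{m,n}$- and $\Ga_m$-cohomology. For each $m$ the five-term inflation--restriction sequence, together with $\Hha^0(G_{m,n},\Wti_m)=0$ (so $(\Wti_m)^{G_{m,n}}=N_{G_{m,n}}\Wti_m$) and the known structure of $H^1$, gives $H^1(\Ga_n,\Wti_m)$ and $H^2(\Ga_n,\Wti_m)$ fitting in a sequence with $H^1(G_{m,n},\Wti_m)^{\Ga_m}$ and $H^2(\Ga_m,(\Wti_m)^{G_{m,n}})$; since $\Ga_m\cong \ZM_p$ has cohomological dimension $1$, the latter $H^2$ vanishes. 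Because $\Ga_n$-cohomology commutes with filtered colimits (the group $\Ga_n\cong\ZM_p$ is topologically finitely generated, or one works with continuous cochains which commute with $\limind$), passing to the limit over $m$ yields $H^1(\Ga_n,\Wind)\cong {\limind}_m H^1(G_{m,n},\Wti_m)^{\Ga_m}$. The $\Ga_m$-action on $T/p^{m-n}T$ is trivial for $m\geq n_d$ by Lemma~\ref{Tm}(2), so the limit is $(\QM_p/\ZM_p)^{s^+}$ as computed above, and $H^2(\Ga_n,\Wind)=0$.

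The main obstacle is the bookkeeping in the previous paragraph: one must be sure that all the identifications of Proposition~\ref{CoWti} and Lemma~\ref{Tm} are natural in $m$, i.e.\ that the "shift" isomorphism (coming from the cohomologically trivial module $(\Wba_\infty)_{\Ga_m}$) and the connecting maps in Lemma~\ref{Sn} commute with the transition maps $\Ga_m\to\Ga_{m+1}$ of the tower. Granting that naturality — which holds because every module in sight is a $\Ga_m$- or $\Ga_{m+1}$-(co)invariant quotient of a fixed $\La$-module and every map is induced by a $\La$-linear map — the colimit computation is immediate, and one does not even need the full strength of Hochschild--Serre: the cyclicity of $G_{m,n}$ plus $\mathrm{cd}(\Ga_m)=1$ plus $\Hha^0(G_{m,n},\Wti_m)=0$ suffice.
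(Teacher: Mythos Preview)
Your short exact sequence of Galois groups is written the wrong way round: since $F_n\subset F_m\subset F_\infty$, the normal subgroup of $\Ga_n$ is $\Ga_m$ and the quotient is $G_{m,n}$, i.e.\ the correct sequence is $1\to \Ga_m\to \Ga_n\to G_{m,n}\to 1$. Your third paragraph is written for the reversed sequence (note the appearance of ``$H^2(\Ga_m,(\Wti_m)^{G_{m,n}})$'' and of ``$H^1(G_{m,n},\Wti_m)^{\Ga_m}$''), and with the correct sequence the five-term exact sequence does \emph{not} give $H^1(\Ga_n,\Wti_m)\cong H^1(G_{m,n},\Wti_m)$: since $\Ga_m$ acts trivially on $\Wti_m$ one has $H^1(\Ga_m,\Wti_m)\cong \Wti_m\neq 0$, so there is a genuine contribution from the restriction term that you have not accounted for. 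In particular, computing $H^i(\Ga_n,\Wti_m)$ level by level and then passing to the limit is more delicate than you suggest (and also $\Wti_m$ is compact, not discrete, so ``$\Ga_n$-cohomology commutes with filtered colimits'' needs care).

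The paper's argument avoids this detour entirely. It applies directly the standard formula (NSW, Proposition~1.5.1) $H^i(\Ga_n,\Wind)=\limind_m H^i(G_{m,n},(\Wind)^{\Ga_m})$, valid because $\Wind$ \emph{is} a discrete $\Ga_n$-module. Then Galois descent---which follows from $\Hha^0(G_{k,m},\Wti_k)=0$, since this gives $(\Wti_k)^{G_{k,m}}=N_{G_{k,m}}\Wti_k=\Wti_m$---identifies $(\Wind)^{\Ga_m}$ with $\Wti_m$, and one is reduced to computing $\limind_m H^i(G_{m,n},\Wti_m)$. For $i=1$ the same Galois descent makes the inflation maps $H^1(G_{m,n},\Wti_m)\to H^1(G_{m+1,n},\Wti_{m+1})$ injective, and since the terms are $(\ZM/p^{m-n})^{s^+}$ the limit is $(\QM_p/\ZM_p)^{s^+}$; for $i=2$ one uses $H^2(G_{m,n},\Wti_m)=\Hha^0(G_{m,n},\Wti_m)=0$. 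Your second paragraph---tracing the transition maps through $T_m\cong(\U'_\infty/\Wba_\infty)^{\Ga_m}$ and identifying the colimit with $T\otimes\QM_p/\ZM_p$---is a correct and more explicit justification of that last step than the paper gives, and is worth keeping; but it should be combined with the direct limit formula above rather than with the (incorrectly oriented) Hochschild--Serre argument.
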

\begin{proof} The groups $H^i(\Ga_n,\Wind)$ are isomorphic to the inductive limit with respect to the maps
$H^i(G_{m,n},\Wti_m)\longrightarrow H^i(G_{m+1,n},\Wti_{m+1})$
induced by the couples of maps $G_{m+1,n} \rightarrow G_{m,n}$ and $\Wti_m\rightarrow \Wti_{m+1}$ (see proposition 1.5.1 of \cite{NSW}). By proposition \ref{CoWti} the $\Wti_m$ satisfies Galois descent, therefore the inflation maps $H^1(G_{m,n},\Wti_m)\longrightarrow H^1(G_{m+1,n},\Wti_{m+1})$ are injectives. As $H^1(G_{m,n},\Wti_m)\simeq (\ZM/p^{m-n})^{s^+}$ the first limit is $(\QM_p/\ZM_p)^{s^+}$. Triviality of all $H^2$ follows from that of all the $\Hha^0$'s.
\end{proof}

We want to describe $G_{m,n}$-cohomology of the $\Cti_m$ and we will use a variation on the above method used for the $\Wti_m$. As at the beginning of the proof of proposition \ref{CoWti} we have an isomorphism
$\ker((\C_\infty)_{\Ga_m} \longrightarrow \Cti_m)\cong (\U'_\infty/\C_\infty)^{\Ga_m}$. But $\U'_\infty/\C_\infty$ may have a non trivial finite submodule. Indeed we have sequence
\begin{eqnarray}\label{Vms}\xymatrix{0\ar[r] & KN_\infty^{\Ga_m} \ar[r] &  (\U'_\infty/\C_\infty)^{\Ga_m} \ar[r] & (\U'_\infty/\Wba_\infty)^{\Ga_m}\ar[r] & {\mathrm{finite}} ,}
\end{eqnarray} extracted from the lemma \ref{Sn} applied to
the sequence $0\rightarrow KN_\infty \rightarrow \U'_\infty/\C_\infty \rightarrow \U'_\infty/\Wba_\infty \rightarrow 0$.
\begin{defi}\label{Vm}
We define  $V_m:=(\U'_\infty/\C_\infty)^{\Ga_m}/KN_\infty^{\Ga_m}$.
\end{defi}
From the sequence (\ref{Vms}) and lemma \ref{Tm} we see that
$V_m$ is $\ZM_p$-free of rank $s^+$ and the natural action by $\Ga_{n_d}$ on $V_m$ is trivial. Hence for $n\geq n_d$ the $G_{m,n}$-cohomology of $V_m$ is just $s^+$ copies of the cohomology of $\ZM$. We will get the cohomology of $\Cti_m$ by going through the cohomology of the sequence~:
\begin{eqnarray}\label{resCti} \xymatrix{0\ar[r] & V_m \ar[r] & (\C_\infty)_{\Ga_m} / KN_\infty^{\Ga_m} \ar[r] & \Cti_m \ar[r] & 0}
\end{eqnarray}
Before this computation  we will prove the existence of a first layer $n$ from which our result will apply and try to make it as precise as possible.
\begin{lemm}\label{cond1} \

\begin{enumerate}
\item There exists an $n$ such that for all integers $m\geq n$ we have $$\left (\Uba'_\infty/\Cba_\infty\right )^{\Ga_{m}} =
\left (\Uba'_\infty/\Cba_\infty\right )^{\Ga_n}.$$
\item For any integer $n$ satisfying 1, the group $\Ga_n$ acts trivially on $KN_\infty$.
\item If $n$ satisfies 1, then $n \geq n_d$.
\item If $KN_\infty=0$ then the integer $n_d$ of definition \ref{nd} satisfies 1.
\end{enumerate}
\end{lemm}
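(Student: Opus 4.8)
The plan is to reduce everything to statements about the single finitely generated $\La$-module $M=\U'_\infty/\C_\infty$. Fix a topological generator $\ga_0$ of $\Ga$ and set $\ga_m=\ga_0^{p^m}$, a topological generator of $\Ga_m$; then $M^{\Ga_m}=\{x\in M:(\ga_m-1)x=0\}$, so these submodules increase with $m$. Identifying $\La$ with $\ZM_p[[T]]$ via $T=\ga_0-1$, write $\omega_m=\ga_m-1=(1+T)^{p^m}-1$; it is a product of distinct prime elements of $\La$, with $\omega_n\mid\omega_m$ whenever $m\geq n$. For assertion (1) I would note that $N=\bigcup_m M^{\Ga_m}$ is a $\La$-submodule of $M$, hence finitely generated because $\La$ is noetherian; if $n$ is chosen larger than all levels needed to annihilate a finite generating set of $N$ then $N=M^{\Ga_n}$, and $M^{\Ga_n}\subseteq M^{\Ga_m}\subseteq N=M^{\Ga_n}$ for all $m\geq n$. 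For assertion (2): since $\C_\infty\subseteq\Wba_\infty\subseteq\U_\infty\subseteq\U'_\infty$, the finite module $KN_\infty=\Wba_\infty/\C_\infty$ is a $\La$-submodule of $M$; its $\Ga$-action factors through a finite quotient, so $KN_\infty\subseteq M^{\Ga_k}$ for some $k$, and if $n$ satisfies (1) then $M^{\Ga_k}\subseteq M^{\Ga_{\max(n,k)}}=M^{\Ga_n}$, i.e. $\Ga_n$ acts trivially on $KN_\infty$.

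For assertion (3) the idea is a rank count. The exact sequence (\ref{Vms}) together with Lemma \ref{Tm}(2) gives $\mathrm{rank}_{\ZM_p}M^{\Ga_m}=s_m^+$, since $KN_\infty^{\Ga_m}$ is finite and $V_m$ embeds with finite cokernel in the $\ZM_p$-free module $(\U'_\infty/\Wba_\infty)^{\Ga_m}$ of rank $s_m^+$. If $n<n_d$ then $s_n^+<s^+=s_{n_d}^+$ --- by minimality of $n_d$ some place above $p$ still splits below level $n_d$, and splitting in $F_\infty/F_m$ matches splitting in $F_\infty^+/F_m^+$ --- so the inclusion $M^{\Ga_n}\subseteq M^{\Ga_{n_d}}$ is strict, whence $n$ fails (1) (take $m=n_d$). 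Contraposition yields (3).

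Assertion (4) is the real point. Assume $KN_\infty=0$, so $\Wba_\infty=\C_\infty$ and $M=\U'_\infty/\Wba_\infty$. First I would observe that $M$ has no nonzero finite submodule: any such submodule maps into the $\ZM_p$-free module $\U'_\infty/\U_\infty$ (Lemma \ref{Tm}(1)), hence lies in $\U_\infty/\Wba_\infty$, whose maximal finite submodule is $KN_\infty=0$ (Proposition \ref{Cinf}(5)). One must show $M^{\Ga_m}=M^{\Ga_{n_d}}$ for all $m\geq n_d$. Multiplication by $\omega_{n_d}$ sends $M^{\Ga_m}=M[\omega_m]$ into $M[\nu]$, $\nu:=\omega_m/\omega_{n_d}$, with kernel exactly $M[\omega_{n_d}]=M^{\Ga_{n_d}}$, so it suffices to prove $M[\nu]=0$. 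Now $M$ is $\La$-torsion (already implicit in the proof of Lemma \ref{Tm}, where $\U'_\infty/\U_\infty$ and $\U_\infty/\Wba_\infty$ are seen to be $\La$-torsion), so a standard computation with the structure theorem gives $\mathrm{rank}_{\ZM_p}M[\varpi]=\deg\gcd(g_0,\varpi)$ for the distinguished squarefree polynomials $\varpi=\omega_m,\omega_{n_d},\nu$, where $g_0$ is the prime-to-$p$ part of a generator of the characteristic ideal of $M$. Using Lemma \ref{Tm}(2) (so $\mathrm{rank}_{\ZM_p}M^{\Ga_m}=s_m^+=s^+$ for $m\geq n_d$), the divisors $\gcd(g_0,\omega_{n_d})\mid\gcd(g_0,\omega_m)$ have the same degree, hence coincide; since $\omega_m=\omega_{n_d}\nu$ is squarefree this forces $\gcd(g_0,\nu)=1$, i.e. $\mathrm{rank}_{\ZM_p}M[\nu]=0$, so $M[\nu]$ is a finite submodule of $M$ and therefore zero.

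I expect assertion (4) to be the main obstacle; assertions (1)--(3) are routine once $M$ is placed in the foreground. The content of (4) is that $n_d$ could fail (1) only if some ``extra'' torsion crept into $M^{\Ga_m}$ for some $m>n_d$, and it is precisely the constancy $s_m^+=s^+$ of the $\ZM_p$-ranks above $n_d$, together with the squarefreeness of the $\omega_m$, that excludes this. The one other point needing a little care is the $\La$-rank bookkeeping that makes $M$ a torsion module, and the standard identification of $p$-adic splitting in $F_\infty/F_m$ with that in $F_\infty^+/F_m^+$.
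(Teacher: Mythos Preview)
Your proofs of (1)--(3) are essentially those of the paper (noetherianity, the inclusion $KN_\infty\subset M$, and a $\ZM_p$-rank count via the sequence~(\ref{Vms}) and Lemma~\ref{Tm}), only spelled out in more detail.

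For (4) you take a genuinely different route. The paper's argument is one line: Lemma~\ref{Tm}(2) already asserts that $\Ga_{n_d}$ acts trivially on $(\U'_\infty/\Wba_\infty)^{\Ga_m}$ for \emph{every} $m$ (because this module injects into $\U'_\infty/\U_\infty$, on which $\Ga_{n_d}$ acts trivially). Hence $(\U'_\infty/\Wba_\infty)^{\Ga_m}\subseteq(\U'_\infty/\Wba_\infty)^{\Ga_{n_d}}$, and for $m\geq n_d$ the reverse inclusion is automatic. No structure theorem is needed.

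Your structure-theoretic argument reaches the same conclusion, but the displayed formula $\mathrm{rank}_{\ZM_p}M[\varpi]=\deg\gcd(g_0,\varpi)$ is not correct as stated: if a prime $\pi\mid\varpi$ occurs in several elementary factors of $M$ (equivalently, if $g_0$ is not squarefree), then each such factor contributes $\deg\pi$ to $\mathrm{rank}_{\ZM_p}M[\varpi]$ but only one $\deg\pi$ to $\deg\gcd(g_0,\varpi)$. For instance $M=\La/T\oplus\La/T$ and $\varpi=T$ give rank $2$ versus $\deg\gcd=1$. The fix is immediate and in fact simplifies your argument: since $\omega_m=\omega_{n_d}\nu$ with $\gcd(\omega_{n_d},\nu)=1$, the Chinese remainder theorem in $\La/(\omega_m)$ gives $M[\omega_m]=M[\omega_{n_d}]\oplus M[\nu]$, so the equality of ranks $s^+=s^+$ forces $\mathrm{rank}_{\ZM_p}M[\nu]=0$ directly, and then $M[\nu]=0$ because $M$ has no nonzero finite submodule. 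With this correction your proof of (4) is valid, though less economical than the paper's.
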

\begin{proof}
The $\La$-module $\U'_\infty/\C_\infty$ is finitely generated. The  sequence of submodules $\left ((\Uba'_\infty/\Cba_\infty)^{\Ga_{m}}\right)_{m\in\NM}$ is increasing. Part 1 of the lemma follows by noetherianity.
Part 2 comes from the inclusion $KN_\infty \subset \U'_\infty/\C_\infty$.
Suppose that some $x\in KN_\infty$ is such that $\si x\neq x$ for some $\si \in \Ga_n$. But $x$, as an element of the finite module $KN_\infty$, must be fixed by some $\Ga_m$ for $m> n$. Hence $x\in \left (\Uba'_\infty/\Cba_\infty\right )^{\Ga_{m}} \setminus
\left (\Uba'_\infty/\Cba_\infty\right )^{\Ga_n}$ and $n$ does not satisfy 1. This shows assertion 2. For assertion 3, and 4 we have seen in the proof of lemma \ref{Tm} that $(\U'_\infty/\Wba_\infty)^{\Ga_m}$ embeds with finite cokernel into $\ZM_p[S_\infty]$ for all $m\geq n_d$. Now $\Ga_{n_d}$ acts trivially on $\ZM_p[S_\infty]$ and if $n_d>0$ then
$\ZM_p[S_\infty]^{\Ga_{n_d-1}}$ is not of finite index in $\ZM_p[S_\infty]$. This proves 3 and 4.
\end{proof}
\begin{defi}\label{nUC} We will denote $n_{U'C}$ the smallest integer $n$ such that for all integers $m\geq n$ the equality $\left (\Uba'_\infty/\Cba_\infty\right )^{\Ga_{m}} =
\left (\Uba'_\infty/\Cba_\infty\right )^{\Ga_n}$ holds.
\end{defi}
This number $n_{U'C}$ is the first nonexplicit part of our "asymptotic" approach. Other approaches found in the literature simply assume hypotheses that imply $KN_\infty=0$ and in such case we simply have $n_{U'C}=n_d$.

\begin{theo}\label{CoCti} Let $s^+$ be as in proposition \ref{CoWti}. Let $n$ be greater than or equal to $n_{U'C}$.
Let $V_m$ be the $\ZM_p$-free module of rank $s^+$ and trivial $\Ga_{n_d}$-action of definition \ref{Vm}.
For every integer $i$ let $KN_\infty[p^i]$ be the kernel of multiplication by $p^i$ in $KN_\infty$.
For every $m\geq n$ we have
$$\Hha^0(G_{m,n},\Cti_m)\cong KN_\infty[p^{m-n}]$$ and a (group split) exact sequence
$$\xymatrix{0\ar[r] &KN_\infty/p^{m-n} \ar[r] &H^{1}(G_{m,n},\Cti_m)\ar[r] &V_m/{p^{m-n}}\ar[r] & 0}.$$
\end{theo}
\begin{proof} Fix $m\geq n\geq n_{U'C}$. To describe the cohomology of the sequence (\ref{resCti}), we first compute the cohomology of $(\C_\infty)_{\Ga_m} / KN_\infty$. From the sequence $0\rightarrow \C_\infty\rightarrow \Wba_\infty \rightarrow KN_\infty\rightarrow 0$ we obtain by lemma \ref{Sn} a sequence
$$\xymatrix{0\ar[r] & (\C_\infty)_{\Ga_m} / KN_\infty \ar[r] & (\Wba_\infty)_{\Ga_m} \ar[r] & KN_\infty \ar[r] & 0 .}$$ As $(\Wba_\infty)_{\Ga_m}$ is $G_{m,n}$-cohomologicaly trivial we see that $(\C_\infty)_{\Ga_m} / KN_\infty$ has the $G_{m,n}$-cohomology of $KN_\infty$ shifted once. The exact hexagone of cyclic Tate cohomology associated to the sequence $(\ref{resCti})$ is now~:
$$\xymatrix { V_m/p^{m-n} \ar[r]^-\al & KN_\infty[p^{m-n}] \ar[r] & \Hha^0(G_{m,n},\Cti_m) \ar[d] \\
\Hha^{-1}(G_{m,n},\Cti_m) \ar[u] & KN_\infty/p^{m-n} \ar[l] & 0 \ar[l] .}$$
To conclude the proof of the theorem it remains now to prove that the map $\al$ of this diagram is the $0$ map.
This map $\al$ is induced by the map $\de\colon (\U'_\infty/\C_\infty)^{\Ga_m} \longrightarrow (\C_\infty)_{\Ga_m}$ of the lemma \ref{Sn} applied to the sequence $0\rightarrow \C_\infty\rightarrow \U'_\infty \rightarrow \U'_\infty/\C_\infty \rightarrow 0$.
To define this map $\de$ let us pick a choice of a generator $\ga_m$ of $\Ga_m$, and for later use let us denote
$\ga_n$ the generator of $\Ga_n$ such that $\ga_n^{p^{m-n}}=\ga_m$.
Then the map $\de$ sends any coset $\uba \in \U'_\infty/\C_\infty$ such that $(\ga_m-1) u \in \C_\infty$ to the coset of $(\ga_m-1) u$ in $(\C_\infty)_{\Ga_m}$. Note that $\ga_m-1= \nu_{m,n} (\ga_n-1)$, where $\nu_{m,n}=\sum_{i=0}^{p^{m-n}-1} \ga_n^i\in \La$ acts like the algebraic norm of the extension $F_m/F_n$ on any $\Ga_m$-trivial module. But the integers $m\geq n$ have been chosen in such a way that $(\U'_\infty/\C_\infty)^{\Ga_m}=(\U'_\infty/\C_\infty)^{\Ga_n}$ and therefore we have $(\ga_m-1) u = \nu_{m,n} (\ga_n-1) u$, where $(\ga_n-1) u \in \C_\infty$. As by definition we have $$\Hha^0(G_{m,n}, (\C_\infty)_{\Ga_m}/KN_\infty) = \frac {((\C_\infty)_{\Ga_m}/KN_\infty)^{G_{m,n}}} {\nu_{m,n} ((\C_\infty)_{\Ga_m}/KN_\infty)},$$ this shows that $\al$ is indeed the $0$ map and concludes the proof of theorem \ref{CoCti}.
\end{proof}
\begin{coro} Let $\Cind={\limind}_{m} \Cti_m$ be the inductive limit of the $\Cti_m$'s. For all $n\geq n_{U'C}$
we have (group)-isomorphism $H^1(\Ga_n,\Cind)\simeq (\QM_p/\ZM_p)^{s^+}$ and $H^2(\Ga_n,\Cind)$ is trivial.
\end{coro}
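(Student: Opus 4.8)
The plan is to mimic the proof of the preceding corollary, feeding in theorem~\ref{CoCti} in place of proposition~\ref{CoWti} and then disposing of the extra finite contribution coming from $KN_\infty$.

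First I would note that $\Cind={\limind}_{m}\Cti_m$ is a \emph{discrete} $\Ga_n$-module, since every element of $\Cti_m$ is fixed by the open subgroup $\Ga_m$. Hence, exactly as for $\Wind$, proposition 1.5.1 of \cite{NSW} gives $H^i(\Ga_n,\Cind)\cong{\limind}_{m}H^i(G_{m,n},\Cti_m)$, where the transition maps are those induced by the couples $G_{m+1,n}\twoheadrightarrow G_{m,n}$ and $\Cti_m\to\Cti_{m+1}$.

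For $i=1$ I would then apply theorem~\ref{CoCti}: as filtered colimits are exact, its split exact sequence gives an exact sequence
\[ 0\longrightarrow {\limind}_{m}\bigl(KN_\infty/p^{m-n}\bigr)\longrightarrow H^1(\Ga_n,\Cind)\longrightarrow {\limind}_{m}\bigl(V_m/p^{m-n}\bigr)\longrightarrow 0. \]
By definition~\ref{Vm} and the hypothesis $n\geq n_{U'C}$, the module $V_m$ does not depend on $m$: it is a fixed $\ZM_p$-free module $V$ of rank $s^+$ carrying trivial $\Ga_n$-action. The transition maps on $V/p^{m-n}\cong(\ZM/p^{m-n})^{s^+}$ are the injections $\ZM/p^{m-n}\hookrightarrow\ZM/p^{m-n+1}$, $1\mapsto p$, so the right-hand colimit is $(\QM_p/\ZM_p)^{s^+}$; this is the same computation as in the $\Wind$ corollary, resting on the fact that inflation along $G_{m+1,n}\twoheadrightarrow G_{m,n}$ multiplies the periodicity class in $H^2(G_{m,n},\ZM)=\ZM/p^{m-n}$ by the degree $p$. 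For the left-hand colimit, $KN_\infty$ is finite; fix $c$ with $p^cKN_\infty=0$. For $m-n\geq c$ one has $KN_\infty/p^{m-n}=KN_\infty$, and the same computation shows the transition map on this piece is multiplication by $p$ on $KN_\infty$, so the colimit vanishes (each element dies after at most $c$ steps). Hence $H^1(\Ga_n,\Cind)\cong(\QM_p/\ZM_p)^{s^+}$. For $i=2$, periodicity of cyclic Tate cohomology and theorem~\ref{CoCti} give $H^2(G_{m,n},\Cti_m)\cong\Hha^0(G_{m,n},\Cti_m)\cong KN_\infty[p^{m-n}]$, which for $m-n\geq c$ equals $KN_\infty$ with multiplication-by-$p$ transition maps, so $H^2(\Ga_n,\Cind)=0$.

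The point that needs care is the naturality in $m$ invoked twice above: under the identifications built in the proof of theorem~\ref{CoCti} --- the subobject $KN_\infty/p^{m-n}$ of $H^1(G_{m,n},\Cti_m)$ being $H^1\bigl(G_{m,n},(\C_\infty)_{\Ga_m}/KN_\infty\bigr)\cong\Hha^0(G_{m,n},KN_\infty)$, and $\Hha^0(G_{m,n},\Cti_m)$ being $H^2(G_{m,n},\Cti_m)$ --- the transition maps of the inductive system must restrict to multiplication by $p$ on these finite pieces. Checking this amounts to chasing the shift isomorphisms attached by lemma~\ref{Sn} to $0\to\C_\infty\to\Wba_\infty\to KN_\infty\to0$ and to~(\ref{resCti}) through inflation and through the module maps $\Cti_m\to\Cti_{m+1}$; it is routine but a little delicate, because $\Cti_m\to\Cti_{m+1}$ and the natural surjection $(\C_\infty)_{\Ga_{m+1}}\twoheadrightarrow(\C_\infty)_{\Ga_m}$ run in opposite directions. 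A possibly cleaner route is to use the short exact sequence $0\to\Cind\to\Wind\to Q\to0$, with $Q={\limind}_{m}(\Wti_m/\Cti_m)$ finite and carrying trivial $\Ga_n$-action: combining the $\Wind$ corollary with the long exact sequence, the $p$-divisibility of $H^1(\Ga_n,\Wind)=(\QM_p/\ZM_p)^{s^+}$ forces the connecting map $H^1(\Ga_n,\Wind)\to H^1(\Ga_n,Q)$ to vanish, whence $H^1(\Ga_n,\Cind)\twoheadrightarrow(\QM_p/\ZM_p)^{s^+}$ and $H^2(\Ga_n,\Cind)\cong H^1(\Ga_n,Q)\cong Q$; one is then reduced to proving $Q=0$, i.e. that a universal norm of Washington units at a finite layer becomes a universal norm of Sinnott units higher up in the tower --- again a manifestation of the finiteness of $KN_\infty$.
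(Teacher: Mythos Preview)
Your argument for $H^1$ is essentially the paper's: both pass to the inductive limit in the short exact sequence extracted from the proof of theorem~\ref{CoCti}, observe that the $V_m$'s stabilise (so their limit contributes $(\QM_p/\ZM_p)^{s^+}$) and that the finite $KN_\infty$-piece is traversed by multiplication by $p$ (so its limit vanishes). The paper phrases the transition on $KN_\infty$ exactly as you do: universal norms stabilise under the norm maps, hence going up is multiplication by $p$.

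For $H^2$ the paper does \emph{not} compute the limit directly. It uses instead that $\Ga_n\cong\ZM_p$ is pro-$p$-free of cohomological $p$-dimension~$1$, so $H^2(\Ga_n,M)$ is $p$-divisible for every discrete $M$; since theorem~\ref{CoCti} bounds $\#\Hha^0(G_{m,n},\Cti_m)$ by $\#KN_\infty$, the limit $H^2(\Ga_n,\Cind)$ is a divisible group of bounded order, hence zero. This one-line argument sidesteps precisely the naturality-in-$m$ issue you flag as ``routine but a little delicate'': you never need to identify the transition map on $KN_\infty[p^{m-n}]$ through two dimension shifts and the periodicity isomorphism.

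Your alternative route via $0\to\Cind\to\Wind\to Q\to 0$ does not close as written. The divisibility of $H^1(\Ga_n,\Wind)$ indeed kills the map into the finite group $H^1(\Ga_n,Q)$, giving $H^2(\Ga_n,\Cind)\cong H^1(\Ga_n,Q)\cong Q$; but then you must prove $Q=0$, and the finiteness of $KN_\infty$ alone does not give this --- it only bounds each $\Wti_m/\Cti_m$, which is compatible with the limit being a nonzero finite group. In fact $Q=0$ is most cleanly obtained \emph{a posteriori} from $H^2(\Ga_n,\Cind)=0$, so this route is circular unless you supply an independent argument (which would amount to the same naturality chase you were trying to avoid). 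I would drop the alternative and simply invoke the $cd_p(\ZM_p)=1$ divisibility argument for $H^2$.
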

\begin{proof}
As $\Ga_n$ is pro-$p$-free $H^2(\Ga_n,\Cind)$ is divisible and its order is bounded by the finite order of $KN_\infty$, hence $H^2(\Ga_n,\Cind)$ is trivial. Let us compute the $H^1(\Ga_n,\Cind)$. Extracted mutatis-mutandis from the proof of the theorem \ref{CoCti} we have the exact sequence for all $m>>0$~:
$$0\longrightarrow H^1(G_{m,n}, (\C_\infty)_{\Ga_m}/KN_\infty) \longrightarrow H^1(G_{m,n},\Cti_m) \longrightarrow H^2(G_{m,n}, V_m)\longrightarrow 0.$$
Of course we have for $m>>0$ the isomorphisms $H^1(G_{m,n}, (\C_\infty)_{\Ga_m}/KN_\infty)\simeq KN_\infty$ and
$H^2(G_{m,n}, V_m)\simeq V_m/p^{m-n}$ and we recover the exact sequence~:
$$0\longrightarrow KN_\infty \longrightarrow H^1(G_{m,n},\Cti_m) \longrightarrow V_m/p^{m-n}\longrightarrow 0.$$
We want to take inductive limit on this sequence, and for that we need to clarify what are the extension maps going up from the $m$th to the $(m+1)$th step for the modules $KN_\infty$ and the modules $V_m/p^{m-n}$.
The finite module of universal norms $KN_\infty$ stabilizes with respect to natural (going down) norm maps and composition of the two maps give multiplication by $p$. Therefore the inductive limit with respect to extension maps of $KN_\infty$ is trivial. On the contrary the modules $(V_m)_m$ stabilize with respect to extension maps already from the first step to  $V_n$ for $m\geq n\geq n_{U'C}$. So for fixed $n\geq n_{U'C}$ the inductive limit $\limind V_m/p^{m-n}$ is $V_n\otimes \QM_p/\ZM_p \simeq (\QM_p/\ZM_p)^{s^+}$.
\end{proof}
\section{Cohomology of circular units}\label{coh}
We will try and recover the cohomology of $\C_m$ and $\Wba_m$ from that of their universal norms $\Cti_m$ and $\Wti_m$. For $\C_m$ the method is not new and has already been used in \cite{NG06} or \cite{BNL} and originally in \cite{JNT1}. Here we only encounter another slight difficulty coming from the non triviality of $\Hha^0(G_{m,n},\Cti_m)$. For $\Wba_m$ our result is original.
\begin{lemm}\label{cond2} For all $n$ let $I_n$ be the inertia subfield for $p$ in $F_n/\QM$. We have $I_{n+1}=I_n$ as soon as $F_{n+1}/F_n$ ramifies. Let $I=\bigcup_n I_n$ be the inertia subfield for $p$ in $F_\infty/\QM$.
\begin{enumerate}
\item For all $n$ we have equality
$$\C_n=\Cti_n \C(I_n),$$
hence for all $n$ such that $F_{n+1}/F_n$ ramifies we have equality
$$\C_n=\Cti_n \C(I).$$
\item\label{itcond2} There exist an integer $n$ such that for all integers $m\geq n$ we have
$$\Cti_{m} \bigcap C(I) = \Cti_{n} \bigcap C(I).$$
\end{enumerate}
\end{lemm}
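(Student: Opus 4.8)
The plan is to prove the three assertions in turn, the last following from the second by a finiteness argument.

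\emph{The inertia subfields.} Let $T\le\Gal(F_{n+1}/\QM)$ be the inertia subgroup at a place above $p$. If $F_{n+1}/F_n$ ramifies at $p$ then, being of prime degree $p$, it is totally ramified there, so $\Gal(F_{n+1}/F_n)\subseteq T$; hence $I_{n+1}=F_{n+1}^{T}\subseteq F_{n+1}^{\Gal(F_{n+1}/F_n)}=F_n$, and since $I_n\subseteq I_{n+1}$ always, $I_{n+1}=I_n$. Ramification at $p$ in the cyclotomic $\ZM_p$-tower, once present in $F_{n+1}/F_n$, persists in every $F_{m+1}/F_m$ with $m\ge n$, so the $I_m$ are then all equal, and their union $I$ is the fixed field of inertia at $p$ in $\Gal(F_\infty/\QM)$; this yields the ``hence'' clause of $(1)$ once the main equality is at hand.

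\emph{The equality $\C_n=\Cti_n\C(I_n)$.} The inclusion $\supseteq$ is immediate: $\Cti_n\subseteq\C_n$, and $\C(I_n)\subseteq\C_n$ since $\mathrm{Cyc}(I_n)\subseteq\mathrm{Cyc}(F_n)$ (as $I_n\subseteq F_n$) and $\ZM_p$ is $\ZM$-flat. For $\subseteq$ one works with Sinnott's generators of $\mathrm{Cyc}(F_n)$ — namely $-1$ and the Galois conjugates of the numbers $\eta_d=N_{\QM(\ze_d)/\QM(\ze_d)\cap F_n}(1-\ze_d)$, $d\mid f_{F_n}$, $d>1$ — separated according to whether $p\mid d$. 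If $p\nmid d$ then $\QM(\ze_d)$ is unramified above $p$, so $\QM(\ze_d)\cap F_n=\QM(\ze_d)\cap I_n$ and $\eta_d\in\mathrm{Cyc}(I_n)$, its conjugates lying there too since $\mathrm{Cyc}(I_n)$ is $\Gal(I_n/\QM)$-stable. If $p\mid d$ one uses the classical norm relations for cyclotomic numbers (e.g. $N_{\QM(\ze_{pN})/\QM(\ze_N)}(1-\ze_{pN})=1-\ze_N$ when $p\mid N$) to control the images $N_{F_m/F_n}(\mathrm{Cyc}(F_m))$ as $m$ grows, and then intersects with units: the valuation of a circular number is the same at all places above a fixed rational prime $\ell$ and is insensitive to the places above the other rational primes, so a circular number with vanishing valuations is a circular \emph{unit}; combined with $p\nmid f_{I_n}$ (whence circular numbers of $I_n$ have trivial valuation above $p$), this lets one split, for a given $u\in\C_n$ expressed through the above generators, a universal-norm factor from a factor in $\C(I_n)$. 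This is the technical core, carried out along the lines of \cite{JNT1}, \cite{NG06}, \cite{BNL}. The ``hence'' then follows from the first part, since $I_n=I$ whenever $F_{n+1}/F_n$ ramifies.

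\emph{Stabilization of $\Cti_m\cap C(I)$.} Choose $n$ with $I\subseteq F_n$ and set $A_m=\Cti_m\cap C(I)$ for $m\ge n$. By the equality just proved, $\C_m=\Cti_m\C(I)$, so $\Coker(\C_\infty\to\C_m)$ is a quotient of $C(I)$ with finite kernel $A_m$. Since $N_{F_{m+1}/F_m}$ is the $p$-th power map on $C(I)\subseteq I^\times\subseteq F_m^\times$, while $N_{F_{m+1}/F_m}(\Cti_{m+1})=\Cti_m$, one gets $p\,A_{m+1}\subseteq A_m$. A rank count using Proposition~\ref{Cinf} and the module $V_m$ of Definition~\ref{Vm} — the kernel of $(\C_\infty)_{\Ga_m}\to\Cti_m$ being an extension of $V_m$ by the finite $KN_\infty^{\Ga_m}$, so $\Cti_m$ has $\ZM_p$-rank $rp^m-s^+$ while $\C_m$ has rank $rp^m-1$ — shows that $A_m$ has $\ZM_p$-rank independent of $m$; and the finiteness of $KN_\infty$ bounds the order of the torsion of $C(I)/A_m$ uniformly in $m$. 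Thus the $A_m$ form a family of full-rank submodules of the finitely generated $\ZM_p$-module $C(I)$, of uniformly bounded index, with $p\,A_{m+1}\subseteq A_m$; their saturations form a decreasing chain of direct summands of constant rank and are therefore stationary, and a noetherianity argument in the spirit of Lemma~\ref{cond1} then forces the $A_m$ themselves to be eventually constant.

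I expect the decisive difficulty to be the second part — showing that the circular units of $F_n$ not already accounted for by $\C(I_n)$ are genuinely universal norms, for an \emph{arbitrary} abelian base field $F$ — which rests on careful bookkeeping of the norm relations among cyclotomic units and of the fields $\QM(\ze_d)\cap F_m$ for growing $m$. The inertia statement, the flatness and valuation manipulations, and the noetherianity in the last part are routine by comparison.
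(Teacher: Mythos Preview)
Your treatment of the inertia-field preliminaries and your sketch of Part~(1) are in line with the argument the paper cites from \cite{pmb}, so I have no substantive comment there.

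Part~(2), however, has a genuine gap. From the norm relation you correctly extract $p\,A_{m+1}\subset A_m$, but then attempt to deduce stabilization via a rank-and-bounded-index argument. Two steps fail. First, the claim that ``the finiteness of $KN_\infty$ bounds the order of the torsion of $C(I)/A_m$ uniformly in $m$'' is unjustified and the attribution is wrong: $\tor(\Phi_m)$ can be nonzero even when $KN_\infty=0$ (see Proposition~\ref{exa}), so $|KN_\infty|$ carries no information about $|\tor(\Phi_m)|$. Second, even granting bounded index, the hypotheses you have assembled --- $A_m$ of fixed rank inside a common saturation $S$, bounded $[S:A_m]$, and $p\,A_{m+1}\subset A_m$ --- do not force stabilization: for $S=\ZM_p$ the alternating sequence $\ZM_p,\ p\ZM_p,\ \ZM_p,\ p\ZM_p,\ldots$ satisfies all of them. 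Your closing ``noetherianity argument in the spirit of Lemma~\ref{cond1}'' is a gesture, not a proof, because you have produced no increasing chain to which noetherianity of $C(I)$ could be applied.

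The observation you are missing is that the sequence $(\Cti_m\cap C(I))_m$ is itself \emph{increasing}. Indeed $\Cti_n\subset\Cti_m$ for $n\le m$: if $x=c_n$ for some $c=(c_k)_k\in\C_\infty$, then acting by $\nu_{m,n}=\sum_{i=0}^{p^{m-n}-1}\ga_n^i\in\La$ gives $(\nu_{m,n}c)_m=N_{F_m/F_n}(c_m)=c_n=x$, so $x\in\Cti_m$. Intersecting with the fixed finitely generated $\ZM_p$-module $C(I)$ thus yields an ascending chain of submodules, and noetherianity finishes Part~(2) in one line. This is exactly the paper's route.
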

\begin{proof} Assertion 1 is lemme 2.5 of \cite{pmb}.
The $\ZM_p$-module $C(I)$ is finitely generated. The sequence of submodules  $\left (\Cti_m\bigcap C(I)\right )_{m\in\NM}$ is increasing. Assertion 2 follows by noetherianity.
\end{proof}
\begin{defi} We will denote $n_C$ the smallest non negative integer such that $F_{n_C+1}/F_n$ ramifies and such that property \ref{itcond2} of lemma \ref{cond2} holds.
\end{defi}
This number $n_C$ is our second nonexplicit asymptotic constant.
However if we assume $KN_\infty=0$ then theorem \ref{CoCti} shows that the $\Cti_m$'s satisfy Galois descent and that proves the equality $n_C=n_i$, where $n_i$ is the smallest non negative integer such that $F_{n_i+1}/F_{n_i}$ ramifies.

After $KN_\infty$ there is another module that we have to consider as a parameter depending on $F$. This module is essentially the one denoted $\Phi$ in \cite{NG06} and that we will define in our more general context now~:
\begin{defi} We call universal co-norms of circular units and  denote by $\Phi_m$ the quotient module $$\Phi_m=\C_m/\Cti_m.$$
\end{defi}
We state and prove now the properties of $\Phi_m$ which are needed to compute the cohomology of $\C_m$. In particular we fix its asymptotic behavior, so that we may
consider it as a parameter (even if it is an non explicit and asymptotic one !). We will give somewhat more explicit informations about this module and examine some examples in section \ref{UCN}.
\begin{lemm}\label{Philem} Recall that $n_i$ is the first nonnegative integer such that $F_{n_i+1}/F_{n_i}$ ramifies.
\begin{enumerate}
\item For all $m$ the $\ZM_p$-rank of $\Phi_m$ is $s^+_m-1$ where $s^+_m$ is the cardinal of the set $S^+_m$ of places dividing $p$ in the maximal real subfield $F_m^+$ of $F_m$.
\item For all $m$ the group $\Ga_{n_i}$ acts trivially on $\Phi_m$.
\item For all $m\geq n_{C}$ the extension map $\C_{n_{C}} \longrightarrow \C_m$ induces an isomorphism
$$\Phi_{n_{C}} \cong \Phi_m.$$
\end{enumerate}
\end{lemm}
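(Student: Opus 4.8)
The plan is to treat the three assertions separately, as they are largely independent.

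\noindent\emph{Assertion 1.} I would count $\ZM_p$-ranks in the tautological exact sequence $0\to\Cti_m\to\C_m\to\Phi_m\to 0$. First, since $[U(F_m):C_m]$ is finite and the maximal totally real subfield satisfies $[F_m^+:\QM]=p^m[F^+:\QM]=rp^m$, Dirichlet's unit theorem gives $\rang_{\ZM_p}\C_m=rp^m-1$. Next I would use the isomorphism $\ker\bigl((\C_\infty)_{\Ga_m}\to\Cti_m\bigr)\cong(\U'_\infty/\C_\infty)^{\Ga_m}$ recalled just before definition \ref{Vm}, which holds for every $m$ since it is a formal consequence of Kuz$'$min's two theorems on $\U'_\infty$. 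Hence $\rang_{\ZM_p}\Cti_m=\rang_{\ZM_p}(\C_\infty)_{\Ga_m}-\rang_{\ZM_p}(\U'_\infty/\C_\infty)^{\Ga_m}$. The first term is $rp^m$: one has $\C_\infty=\C_\infty^+\oplus\overline{\mu}_\infty$ with $(\overline{\mu}_\infty)_{\Ga_m}$ finite, and applying $\Ga_m$-coinvariants to $0\to\C_\infty^+\to\Wba_\infty^+\to KN_\infty\to 0$ (proposition \ref{Cinf}, assertions 1 and 2), where $\Wba_\infty^+$ is $\La$-free of rank $r$ and $KN_\infty$ is finite, gives $\rang_{\ZM_p}(\C_\infty^+)_{\Ga_m}=rp^m$. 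The second term is $s_m^+$, by the exact sequence (\ref{Vms}) — its leftmost term $KN_\infty^{\Ga_m}$ being finite — together with lemma \ref{Tm}, assertion 2. Therefore $\rang_{\ZM_p}\Cti_m=rp^m-s_m^+$ and $\rang_{\ZM_p}\Phi_m=(rp^m-1)-(rp^m-s_m^+)=s_m^+-1$.

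\noindent\emph{Assertion 2.} I would start from the equality $\C_m=\Cti_m\C(I_m)$ of lemma \ref{cond2}, assertion 1. As $I_m/\QM$ is abelian, $\C(I_m)$ is a $\La$-submodule of $\C_m$, and the second isomorphism theorem gives a $\La$-isomorphism $\Phi_m=\C_m/\Cti_m\cong\C(I_m)/\bigl(\C(I_m)\cap\Cti_m\bigr)$. So it is enough that $\Ga_{n_i}$ act trivially on $\C(I_m)$, which follows once $I_m\subseteq F_{n_i}$. This inclusion is the only substantial point: by definition of $n_i$ the extension $F_\infty/F_{n_i}$ is totally ramified at every prime above $p$, so the compositum $I_mF_{n_i}$, being an intermediate field of $F_\infty/F_{n_i}$ that is unramified over $F_{n_i}$ at $p$ (since $I_m/\QM$ is unramified at $p$), must equal $F_{n_i}$; hence $I_m\subseteq F_{n_i}$. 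Then $\Ga_{n_i}=\Gal(F_\infty/F_{n_i})$ fixes $I_m$ pointwise, hence acts trivially on $C(I_m)$ and on $\C(I_m)=C(I_m)\otimes\ZM_p$, and therefore on $\Phi_m$.

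\noindent\emph{Assertion 3.} Fix $m\geq n_C$. Since $n_C\geq n_i$ we have $I_m=I_{n_C}=I$, so $\C(I)\subseteq\C_{n_C}$ and $\C_{n_C}=\Cti_{n_C}\C(I)$, $\C_m=\Cti_m\C(I)$ by lemma \ref{cond2}, assertion 1. I would first check that the extension map $e\colon\C_{n_C}\hookrightarrow\C_m$ carries $\Cti_{n_C}$ into $\Cti_m$: the factorisation of $\C_\infty\to\C_{n_C}$ through $N_{F_m/F_{n_C}}\colon\C_m\to\C_{n_C}$ gives $\Cti_{n_C}=N_{F_m/F_{n_C}}(\Cti_m)$, while $e\circ N_{F_m/F_{n_C}}$ is multiplication on $\C_m$ by $\nu_{m,n_C}=\sum_{\sigma\in\Gal(F_m/F_{n_C})}\sigma$, whence $e(\Cti_{n_C})=\nu_{m,n_C}\Cti_m\subseteq\Cti_m$. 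Thus $e$ induces $\overline e\colon\Phi_{n_C}\to\Phi_m$, which is surjective because $\C_m=\Cti_m\C(I)$ and $\C(I)\subseteq\C_{n_C}$. For injectivity, take $x\in\C_{n_C}$ with $e(x)\in\Cti_m$ and write $x=cu$ with $c\in\Cti_{n_C}$, $u\in\C(I)$; then $u=e(u)=e(x)e(c)^{-1}\in\C(I)\cap\Cti_m=\C(I)\cap\Cti_{n_C}$, the last equality being property \ref{itcond2} of lemma \ref{cond2} built into the definition of $n_C$, so $x=cu\in\Cti_{n_C}$. Hence $\overline e$ is an isomorphism.

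I expect assertion 3 to be the delicate one: one must notice that the extension map actually preserves the universal norms — via the identity $e\circ N_{F_m/F_{n_C}}=\nu_{m,n_C}$ — so that $\overline e$ is even defined, and that property \ref{itcond2} in the definition of $n_C$ is precisely what forces $\overline e$ to be injective. Assertion 1 is then pure bookkeeping, provided one observes that the kernel description preceding definition \ref{Vm} is unconditional in $m$, and assertion 2 reduces to the elementary ramification fact $I_m\subseteq F_{n_i}$.
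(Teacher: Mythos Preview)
Your proof is correct and follows essentially the same approach as the paper: the four-term exact sequence $0\to(\U'_\infty/\C_\infty)^{\Ga_m}\to(\C_\infty)_{\Ga_m}\to\C_m\to\Phi_m\to 0$ for the rank computation, and the identification $\Phi_m\cong\C(I_m)/(\Cti_m\cap\C(I_m))$ from lemma \ref{cond2} for assertions 2 and 3. Your treatment of assertion 3 is in fact more careful than the paper's, which simply asserts that the extension map ``commutes with the identity'' on $\C(I)/(\Cti_{n_C}\cap\C(I))=\C(I)/(\Cti_m\cap\C(I))$ without explicitly checking, as you do, that the extension carries $\Cti_{n_C}$ into $\Cti_m$.
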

\begin{proof} This lemma is an easy generalization of lemma-definition 2.2 in \cite{BNL}, but for the convenience of the reader we reprove it. To compute the rank of $\Phi_m$ just use the exact sequence
$$\xymatrix {0\ar[r] & (\U'_\infty/\C_\infty)^{\Ga_m}  \ar[r] & (\C_\infty)_{\Ga_m}\ar[r] & \C_m \ar[r] & \Phi_m \ar[r] & 0}.$$
Let $r=[F^+:\QM]$. Then the $\La$-module $\C_\infty$ has trivial $\Ga$-invariants and rank $r$. It follows that
$(\C_\infty)_{\Ga_m}$ has $\ZM_p$-rank equal to $rp^m$. Also the $\ZM_p$-module $\C_m$ is of finite index in $\U_m$ and has rank $rp^m-1$ by Dirichlet's theorem. The module $(\U'_\infty/\C_\infty)^{\Ga_m}$ has the same rank as $(\U'_\infty/\Wba_\infty)^{\Ga_m}$ i.e. $s_m^+$ by assertion 2 of lemma \ref{Tm}. This proves 1. Assertions 2 and 3 follow from lemma \ref{cond2}. Indeed for all $m$ we have
$$\Phi_m= \C_m/\Cti_m = \Cti_m C(I_m)/ \Cti_m \cong C(I_m)/\Cti_m \bigcap C(I_m).$$
For all $m$ the action of $\Ga_{n_i}$ is trivial on $I_m$, which proves 2 and for all $m\geq n_{C}$ the extension map $\Phi_{n_C} \longrightarrow \Phi_m$ commutes with
the identity map in $$\C(I)/(\Cti_{n_C}\cap \C(I)) = \C(I)/(\Cti_m\cap \C(I)),$$ which proves 3.
\end{proof}
In the sequel we will abbreviate $\Phi_{n_C}$ to simply $\Phi$.
Recall that from theorem \ref{CoCti} we have for all $m\geq n\geq n_{U'C}$ isomorphisms $\Hha^{0}(G_{m,n},\Cti_m)\cong KN_\infty[p^{m-n}]$ and exact sequences~:
$$\xymatrix{0\ar[r] &KN_\infty/p^{m-n} KN_\infty \ar[r] &\Hha^{-1}(G_{m,n},\Cti_m)\ar[r] & (\ZM/{p^{m-n}})^{s^+} \ar[r] & 0}.$$
\begin{theo}\label{asy} Let $n$ be any integer not less than both $n_{U'C}$ and $n_C$.
Then for all $m\geq n$ we have exact sequences
$$\xymatrix{ 0\ar[r] & KN_\infty[p^{m-n}]  \ar[r] & \Hha^0(G_{m,n}, \Cba_m) \ar[r] & \Phi/p^{m-n} \Phi \ar[r] &  0}$$
and
$$\xymatrix{ 0\ar[r] & H^{1}(G_{m,n},\Cti_m) \ar[r] & H^{1}(G_{m,n}, \Cba_m) \ar[r] & \Phi[p^{m-n}] \ar[r] &  0}.$$
\end{theo}
\begin{proof} Fix $m\geq n$ with $n\geq \max(n_C,n_{U'C})$. As $G_{m,n}$ is cyclic we may and will compute $\Hha^{-1}$ during the proof. Then this proof consists in splitting into two pieces the exact hexagone of cyclic Tate cohomology groups associated to the exact sequence $0\longrightarrow \Cti_m\longrightarrow \C_m \longrightarrow \Phi \longrightarrow 0$. At first sight this exact hexagone is
$$\xymatrix{ KN_\infty[p^{m-n}]  \ar[r]^-{\varphi_{0}} & \Hha^0(G_{m,n}, \Cba_m) \ar[r] & \Phi/p^{m-n} \Phi  \ar[d]^-{\de_0} \\
\Phi[p^{m-n}]\ar[u]^-{\de_{-1}} & \Hha^{-1}(G_{m,n}, \Cba_m) \ar[l] & \Hha^{-1}(G_{m,n},\Cti_m) \ar[l]^-{\varphi_{-1}} .}$$
So we only have to prove that the two $\de$ maps are both the $0$ map or equivalently that the $\varphi$ maps are injective. The injectivity of $\varphi_{-1}$ follows from the stabilization with respect to extension maps of the quotients $\Phi_m$. Indeed from this stabilization property we can write $\C_m=\C_n \Cti_m$ and therefore for all generator $\si$ of $G_{m,n}$ we get $(\si-1) (\C_m)=(\si-1) (\Cti_m)$ which proves that $\varphi_{-1}$ is injective.

The kernel of $\varphi_0$ is by definition the quotient $N_{m,n}(\C_m)\cap\Cti_m/N_{m,n} (\Cti_m)$, where $N_{m,n}$ stands for the norm map of $F_m/F_n$. But by assertion 2 of lemma \ref{cond2} we have
$$N_{m,n}(\C_m)=N_{m,n}(\Cti_m \C(I))=N_{m,n}(\Cti_m) \C(I)^{p^{m-n}}.$$ Hence the equalities
$$\ker \varphi_0=\frac {\left (N_{m,n}(\Cti_m) \C(I)^{p^{m-n}} \right )\bigcap \Cti_m } {N_{m,n}(\Cti_m)}=
\frac {N_{m,n}(\Cti_m) \left (\C(I)^{p^{m-n}} \bigcap \Cti_m\right ) } {N_{m,n}(\Cti_m)}.$$
But by the definition of $n_C$ for $m\geq n\geq n_C$ we have $\C(I)\cap\Cti_m =\C(I)\cap \Cti_n$ and by definition of $\Cti_n$ we have $\Cti_n\subset N_{m,n} (\Cti_m)$. This proves that $\ker \varphi_0=0$.
\end{proof}
\begin{coro}\label{infC} Let $\overset \rightarrow {C}_\infty$ be the inductive limit with respect to extension maps of the $\C_m$'s. Let $\tor(\Phi)$ be the $\ZM_p$-torsion of $\Phi$. For all $n$ not less than both $n_C$ and $n_{U'C}$ we have (group) isomorphisms~:
$$H^1(\Ga_n,\overset \rightarrow {C}_\infty)\simeq (\QM_p/\ZM_p)^{s^+}\oplus \tor(\Phi), $$
and
$$H^2(\Ga_n,\overset \rightarrow {C}_\infty)\simeq (\QM_p/\ZM_p)^{s^+-1}.$$
\end{coro}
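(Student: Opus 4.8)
The plan is to pass to the inductive limit over $m$ in the two exact sequences of Theorem \ref{asy}, in the spirit of the corollaries following Theorems \ref{CoWti} and \ref{CoCti}; throughout, $n$ is fixed with $n\geq\max(n_C,n_{U'C})$, so that both Theorem \ref{asy} and the corollary to Theorem \ref{CoCti} apply. Recall from Proposition 1.5.1 of \cite{NSW} that $H^i(\Ga_n,\overset\rightarrow C_\infty)=\limind_m H^i(G_{m,n},\Cba_m)$, the transition maps being inflation along $G_{m+1,n}\twoheadrightarrow G_{m,n}$ composed with the map induced by the extension $\Cba_m\hookrightarrow\Cba_{m+1}$; these extension maps restrict to the extension maps $\Cti_m\hookrightarrow\Cti_{m+1}$ and, through Lemma \ref{Philem}(3), induce the identity of $\Phi=\Phi_{n_C}$, so the sequences of Theorem \ref{asy} form an inductive system of short exact sequences. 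Since $G_{m,n}$ is cyclic we freely use $H^1=\Hha^{-1}$ and $H^2=\Hha^0$, and recall from Theorem \ref{CoCti} that the first terms of these two sequences are $\Hha^0(G_{m,n},\Cti_m)=KN_\infty[p^{m-n}]$ and $H^1(G_{m,n},\Cti_m)$.

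For the $H^1$ statement I would apply the exact functor $\limind_m$ to the sequence $0\to H^1(G_{m,n},\Cti_m)\to H^1(G_{m,n},\Cba_m)\to\Phi[p^{m-n}]\to 0$. The colimit of the first terms is $H^1(\Ga_n,\Cind)\cong(\QM_p/\ZM_p)^{s^+}$ by the corollary to Theorem \ref{CoCti}. The third term is $\Hha^{-1}(G_{m,n},\Phi)$ for the trivial $G_{m,n}$-module $\Phi$, on which the transition map is inflation along $G_{m+1,n}\twoheadrightarrow G_{m,n}$, i.e. the natural inclusion $\Phi[p^{m-n}]\hookrightarrow\Phi[p^{m+1-n}]$; hence its colimit is $\bigcup_k\Phi[p^k]=\tor(\Phi)$, a finite group since $\Phi$ is a finitely generated $\ZM_p$-module. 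We therefore get an exact sequence $0\to(\QM_p/\ZM_p)^{s^+}\to H^1(\Ga_n,\overset\rightarrow C_\infty)\to\tor(\Phi)\to 0$, and it splits because $(\QM_p/\ZM_p)^{s^+}$ is divisible, hence injective as an abelian group; this gives the first isomorphism.

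For the $H^2$ statement I would apply $\limind_m$ to the sequence $0\to\Hha^0(G_{m,n},\Cti_m)\to\Hha^0(G_{m,n},\Cba_m)\to\Phi/p^{m-n}\Phi\to 0$. Here the colimit of the first terms is $\limind_m\Hha^0(G_{m,n},\Cti_m)=H^2(\Ga_n,\Cind)=0$, again by the corollary to Theorem \ref{CoCti}. The third term is $\Hha^0(G_{m,n},\Phi)=H^2(G_{m,n},\Phi)$ for the trivial module $\Phi$, on which the transition map $\Phi/p^{m-n}\Phi\to\Phi/p^{m+1-n}\Phi$ (inflation along $G_{m+1,n}\twoheadrightarrow G_{m,n}$) is multiplication by $p$. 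By Lemma \ref{Philem}(1) the $\ZM_p$-rank of $\Phi$ is $s^+-1$, so $\Phi\cong\ZM_p^{\,s^+-1}\oplus\tor(\Phi)$; then $\limind(\ZM_p/p^k\ZM_p,\ \cdot\,p)\cong\QM_p/\ZM_p$ while $\limind(\tor(\Phi)/p^k,\ \cdot\,p)=0$ as $\tor(\Phi)$ is finite. Hence this colimit is $(\QM_p/\ZM_p)^{s^+-1}$, and since the left-hand contribution is zero, $H^2(\Ga_n,\overset\rightarrow C_\infty)\cong(\QM_p/\ZM_p)^{s^+-1}$.

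The step I expect to be the main obstacle is the bookkeeping of the transition maps in these two inductive systems: one must see that on the $\Hha^{-1}$-layer $\Phi[p^{m-n}]$ they are the inclusions — so that the $p$-torsion of $\Phi$ survives into $H^1$ — whereas on the $\Hha^0$-layer $\Phi/p^{m-n}\Phi$ they are multiplication by $p$, so that it is the free $\ZM_p$-rank $s^+-1$ of $\Phi$, and not $\tor(\Phi)$, that is detected by $H^2$. One also has to verify that the sequences of Theorem \ref{asy} are genuinely compatible with the extension maps (so that they form an inductive system of short exact sequences and the sub-system $H^i(G_{m,n},\Cti_m)$ is the one computing $H^i(\Ga_n,\Cind)$), which is obtained by tracing through the constructions in the proofs of Theorems \ref{CoCti} and \ref{asy}; the only vanishing input needed, $\limind_m\Hha^0(G_{m,n},\Cti_m)=0$, is exactly the one already established in the corollary to Theorem \ref{CoCti}.
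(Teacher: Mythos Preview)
Your proposal is correct and follows essentially the same approach as the paper: both pass to the inductive limit in the two short exact sequences of Theorem \ref{asy}, use the corollary to Theorem \ref{CoCti} (equivalently, the vanishing of the $KN_\infty$-contributions in the limit) for the $\Cti_m$-terms, identify the transition maps on $\Phi[p^{m-n}]$ as the inclusions and on $\Phi/p^{m-n}$ as multiplication by $p$, and split the resulting $H^1$-sequence using the injectivity of $(\QM_p/\ZM_p)^{s^+}$. Your write-up is in fact more explicit than the paper's about why the transition maps on the $\Phi$-pieces are what they are, which is exactly the point you flagged as the main obstacle.
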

\begin{proof} As before, when taking inductive limits, all contributions from the module $KN_\infty$ vanish. The module $\Phi$, contrary to $KN_\infty$, stabilizes with respect to extension maps. Therefore extensions for $\Phi[p^k]\longrightarrow \Phi[p^{k+1}]$ ultimately are isomorphisms and give $\tor(\Phi)$ as inductive limit, while extensions for $\Phi/p^{k} \longrightarrow \Phi/p^{k+1}$ are right from the start induced by multiplication by $p$ and give $\Phi \otimes (\QM_p/\ZM_p)\simeq (\QM_p/\ZM_p)^{s^+-1}$ as inductive limit. The sequence with $H^1$ is group split because $(\QM_p/\ZM_p)^{s^+}$ is an injective group.
\end{proof}
Finally we want to describe the cohomology of the $W_m$'s. We use the same strategy as for the $C_m$'s and for that, we first have to prove the stabilization by extension maps of the $\Wba_m/\Wti_m$'s. We will deduce this stabilization from the one of the $\Phi_m$'s, which unfortunately gives us a even worst lower bound $n_W$ from which our results apply.
\begin{lemm}\label{Wasy} Exists an $n$ such that for all $m\geq n$ the extension map $\Wba_n\longrightarrow\Wba_m$ induces an isomorphism $\Wba_n/\Wti_n \cong \Wba_m/\Wti_m$.
\end{lemm}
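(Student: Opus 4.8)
The plan is to deduce the stabilization of the $\Wba_m/\Wti_m$ from that of the $\Phi_m=\Cba_m/\Cti_m$ (Lemma \ref{Philem}~(3), which holds for $m\geq n_C$), by comparing, inside $\Wba_m$, the two filtrations $\Cti_m\subseteq\Cba_m\subseteq\Wba_m$ and $\Cti_m\subseteq\Wti_m\subseteq\Wba_m$. Note at the outset that $\Cti_m\subseteq\Wti_m$, since these are the images of $\C_\infty\subseteq\Wba_\infty$ under the projection $\Wba_\infty\twoheadrightarrow\Wba_m$. (Pro-$p$-completions are understood throughout; I write $\Wba(I_m)$ for the pro-$p$-completion of $W(I_m)$, in parallel with the notation $\C(I_m)$ used in Lemma \ref{cond2}.)

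First I would introduce the comparison map. The inclusion $\Cba_m\hookrightarrow\Wba_m$ followed by $\Wba_m\twoheadrightarrow\Wba_m/\Wti_m$ annihilates $\Cti_m$ and hence factors through a map $\psi_m\colon\Phi_m\longrightarrow\Wba_m/\Wti_m$, whose kernel is $(\Cba_m\cap\Wti_m)/\Cti_m$, a submodule of $\Wti_m/\Cti_m$, and whose cokernel is $\Wba_m/(\Cba_m\Wti_m)$, a quotient of $\Wba_m/\Cba_m$. Now $\Wba_m/\Cba_m$ is finite of order bounded independently of $m$, by the main result of \cite{KN95}; and $\Wti_m/\Cti_m$ is finite of order at most $\#KN_\infty$, being a quotient of $KN_\infty=\Wba_\infty/\C_\infty$ under the map carrying the class of $w\in\Wba_\infty$ to the class modulo $\Cti_m$ of the image of $w$ in $\Wba_m$. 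Therefore $\ker\psi_m$ and $\coker\psi_m$ are finite of uniformly bounded order, $\Wba_m/\Wti_m$ sits in an exact sequence $0\to\Phi_m/\ker\psi_m\to\Wba_m/\Wti_m\to\coker\psi_m\to 0$, and in particular (compare Lemma \ref{Philem}~(1)) it is a finitely generated $\ZM_p$-module of rank $s_m^+-1$. It remains to turn ``$\Wba_m/\Wti_m$ agrees with $\Phi_m$ up to uniformly bounded finite modules'' into the genuine stabilization asserted by the lemma.

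The hard part is precisely this last step, and I would carry it out by reproducing, for Washington units, the mechanism behind Lemma \ref{cond2}~(2) and Lemma \ref{Philem}~(3). Let $I_m$ be the inertia subfield for $p$ in $F_m/\QM$; since $I_m\subseteq F_m$ its conductor divides that of $F_m$, so that $\mathrm{Cyc}(\QM(\ze_{f(I_m)}))\subseteq\mathrm{Cyc}(\QM(\ze_{f(F_m)}))$ and hence $\Wba(I_m)\subseteq\Wba(F_m)=\Wba_m$. The key input to be proved is the Washington analogue
$$\Wba_m=\Wti_m\,\Wba(I_m)$$
of the factorization $\Cba_m=\Cti_m\,\C(I_m)$ of Lemma \ref{cond2}~(1): the inclusion $\supseteq$ is immediate, and the reverse inclusion is the substantive statement, which should come either from a direct manipulation of the cyclotomic numbers $N_{\QM(\ze_d)/\QM(\ze_d)\cap F_m}(1-\ze_d)$ parallel to lemme~2.5 of \cite{pmb}, or by feeding the already known $C$-factorization into a descent argument exploiting that the index $(W_m:C_m)$ is bounded and is, at the level of $I_m$, already realised inside $\Wba(I_m)$. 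Granting this factorization, and using that $I_m$ is eventually the fixed field $I=\bigcup_n I_n$ (it no longer grows once $F_{m+1}/F_m$ ramifies, by Lemma \ref{cond2}), one obtains isomorphisms $\Wba_m/\Wti_m\cong\Wba(I)/\bigl(\Wba(I)\cap\Wti_m\bigr)$ compatible with the extension maps, on which moreover $\Ga_{n_i}$ acts trivially (it fixes $I\subseteq F_{n_i}$). The submodules $\Wba(I)\cap\Wti_m$ of the finitely generated $\ZM_p$-module $\Wba(I)$ form an increasing chain, hence stabilize by noetherianity at some layer $n$, necessarily $\geq n_C$; this is the worse bound $n_W$ announced before the lemma. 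For $m\geq n\geq n_W$ the extension map $\Wba_n/\Wti_n\to\Wba_m/\Wti_m$ is then induced by the identity on $\Wba(I)/\bigl(\Wba(I)\cap\Wti_n\bigr)=\Wba(I)/\bigl(\Wba(I)\cap\Wti_m\bigr)$, hence an isomorphism, which is the assertion. I expect the Washington factorization $\Wba_m=\Wti_m\,\Wba(I_m)$ to be the only real obstacle; everything downstream of it is the same noetherian bookkeeping already carried out for $\Phi_m$.
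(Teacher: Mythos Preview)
Your comparison map $\psi_m\colon\Phi_m\to\Wba_m/\Wti_m$ (with kernel contained in $\Wti_m/\Cti_m$ and cokernel a quotient of $\Wba_m/\Cba_m$) is exactly the paper's diagram $(\dag)$, and your boundedness observations for kernel and cokernel are correct. The gap is in how you pass from ``stabilization up to uniformly bounded finite error'' to genuine stabilization.

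You stake everything on a Washington analogue $\Wba_m=\Wti_m\,\Wba(I_m)$ of Lemma~\ref{cond2}(1), which you do not prove and yourself flag as ``the only real obstacle''. This is not a routine transposition: Sinnott's $C(F_m)$ is generated as a Galois module by the explicit norms $N_{\QM(\ze_d)/\QM(\ze_d)\cap F_m}(1-\ze_d)$, and the proof of lemme~2.5 of \cite{pmb} proceeds generator by generator using distribution relations; by contrast $W(F_m)=\mathrm{Cyc}(\QM(\ze_{f_m}))\cap U(F_m)$ is cut out of the full cyclotomic group by a field condition, with no comparable explicit Galois generating set in general. Your fallback ``descent argument exploiting that the index $(W_m:C_m)$ is bounded'' is not a proof either: bounded index does not by itself force a factorization through $\Wba(I_m)$. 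So the proposal has an unfilled hole precisely at its declared crux.

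The paper avoids any factorization of $\Wba_m$. Starting from your own sequence
\[
0\longrightarrow \Cba_m/(\Wti_m\cap\Cba_m)\longrightarrow \Wba_m/\Wti_m\longrightarrow \Wba_m/(\Cba_m\Wti_m)\longrightarrow 0,
\]
it first observes that the extension map on the middle term is \emph{injective} for $m\geq n\geq n_d$: Proposition~\ref{CoWti} gives $\Hha^0(G_{m,n},\Wti_m)=0$, hence $\Wti_m^{G_{m,n}}=N_{m,n}(\Wti_m)=\Wti_n$, so any $w\in\Wba_n$ whose extension lands in $\Wti_m$ already lies in $\Wti_n$. Next, the extension map on the left term is \emph{surjective} for $n\geq n_C$ because $\Phi_n\to\Phi_m$ is (Lemma~\ref{Philem}(3)). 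The snake lemma then forces the extension maps on the right terms $\Wba_n/(\Cba_n\Wti_n)$ to be injective, with cokernel isomorphic to that on $\Wba_m/\Wti_m$. But the right terms are finite of order at most $\max_k\#(\Wba_k/\Cba_k)$ (this maximum exists by \cite{KN95}), and injectivity makes their orders nondecreasing in $n$, so they stabilize; hence so do the $\Wba_m/\Wti_m$. All three ingredients---Proposition~\ref{CoWti}, Lemma~\ref{Philem}(3), and the Ku\v{c}era--Nekov\'a\v{r} bound---are ones you already invoked; the point is that together they suffice, with no Washington-side factorization needed.
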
 \begin{proof} First take  an $n\geq n_C$, and consider the (tautological) diagram with exact rows~:
$$\xymatrix@=12pt{0\ar[r]& (\Wti_n \cap \C_n)/\Cti_n \ar[r]& \C_n/\Cti_n \ar[rr]\ar@{->>}[rd]& & \Wba_n/\Wti_n \ar[r]& \Wba_n/(\C_n\Wti_n) \ar[r]& 0 \\ & (\dag)& & \C_n/(\Wti_n\cap \C_n)\ar@{^(->}[ur] & & & }$$
As $n\geq n_C$ the extension map $\Phi_n\longrightarrow \Phi_m$ is  surjective and so is the extension map $\C_n/(\Wti_n\cap \C_n)\longrightarrow \C_m/(\Wti_n\cap \C_m)$. Recall that we have $n\geq n_C\geq n_i\geq n_d$, so that theorem \ref{CoWti} applies. In particular the $\Wti_m$'s satisfy Galois descent for $m\geq n$ and the extension
map $\Wba_n/\Wti_n \longrightarrow \Wba_m/\Wti_m$ is injective. Now use the snake lemma on the right hand part of the diagram $(\dag)$ with extension maps. This gives :
$$\xymatrix{ &  & 0\ar[d]\ar[r] & K_{n,m} \ar[d] & \\
0\ar[r] & \C_n/(\Wti_n\cap \C_n) \ar[r]\ar[d] & \Wba_n/\Wti_n \ar[r]\ar[d]& \Wba_n/(\C_n\Wti_n) \ar[r]\ar[d]& 0 \\
0\ar[r] & \C_m/(\Wti_m\cap \C_m) \ar[r]\ar[d] & \Wba_m/\Wti_m \ar[r]\ar[d]& \Wba_m/(\C_m\Wti_m) \ar[r]\ar[d] & 0 \\
& 0 \ar[r]&CoK_{n,m}\ar[r] & CoK'_{n,m}\ar[r]&0.}$$
It follows that the kernels $K_{n,m}$ are trivial and that the co-kernels $CoK_{n,m}$ and $CoK'_{n,m}$ are isomorphic. By the triviality of the kernels we see that the orders of the finite groups $\Wba_n/\C_n\Wti_n$ are increasing with $n$, but must stabilize because they are uniformly bounded by the maximal of the orders of $\C_n/\Wba_n$ (which exists according to main result of \cite{KN95}). So for an $n$ greater than $n_C$ and such that the order of $\Wba_n/\C_n\Wti_n$ is maximal, we get the triviality of the two cokernels $CoK_{n,m}$.\end{proof}
\begin{defi} We will denote by $n_W$ the smallest integer not less than $n_d$ and such that for all $m\geq n$ the extension map $\Wba_n/\Wti_n\longrightarrow \Wba_m/\Wti_m$ is an isomorphism. We will abbreviate $\Wba_m/\Wti_m$ to $\Phi W_m$ and $\Wba_{n_W}/\Wti_{n_W}$ to simply $\Phi W$.
\end{defi}
Using diagram $(\dag)$ again we see that $\Phi W_m$ and $\Phi_m$ have the same $\ZM_p$-ranks, namely $s^+_m-1$, and of course that the natural action of $\Ga_{n_W}$ on all $\Phi W_m$'s is trivial.
\begin{theo}\label{CoW} For all $m\geq n \geq n_W$ we have isomorphisms~:
$$\Hha^0(G_{m,n}, \Wba_m)\cong \Phi W/p^{m-n}$$
and (group split) exact sequences~:
$$\xymatrix{ 0\ar[r] & (\ZM/p^{m-n})^{s^+} \ar[r] & H^{1}(G_{m,n},\Wba_m) \ar[r] & \Phi W[p^{m-n}] \ar[r] & 0}$$
\end{theo}
\begin{proof} Using lemma \ref{Wasy} the proof of this theorem is the same as the proof of theorem \ref{asy}. It is even actually easier because when splitting in two pieces the exact hexagone associated to the sequence $0\longrightarrow \Wti_m\longrightarrow \Wba_m \longrightarrow \Phi W \longrightarrow 0$, we only have to prove the injectivity of the map corresponding to $\varphi_{-1}$ by virtue of the triviality of $\Hha^0(G_{m,n},\Wti_m)$. This injectivity in turn follows from the stabilization of $\Phi W_m$ with respect to extension maps.
\end{proof}
\begin{coro} Let $\overset \rightarrow {W}_\infty$ be the inductive limit with respect to extension maps of the $\Wba_m$. Let $\tor(\Phi W)$ be the $\ZM_p$-torsion of $\Phi W$. For all $n$ greater than or equal to $n_W$ we have (group) isomorphisms~:
$$H^1(\Ga_n,\overset \rightarrow {W}_\infty)\simeq (\QM_p/\ZM_p)^{s^+}\oplus \tor(\Phi W), $$
and
$$H^2(\Ga_n,\overset \rightarrow {W}_\infty)\simeq (\QM_p/\ZM_p)^{s^+-1}.$$
\end{coro}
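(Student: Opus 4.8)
The plan is to obtain this corollary from Theorem \ref{CoW} by passing to the inductive limit, in complete analogy with the way Corollary \ref{infC} is deduced from Theorem \ref{asy}. First I would recall that for $n\geq n_W$ one has $H^i(\Ga_n,\overset \rightarrow {W}_\infty)={\limind}_m\,H^i(G_{m,n},\Wba_m)$, the transition maps being those induced by the pairs $(G_{m+1,n}\twoheadrightarrow G_{m,n},\ \Wba_m\hookrightarrow\Wba_{m+1})$ (proposition 1.5.1 of \cite{NSW}), and that for every $m\geq n\geq n_W$ Theorem \ref{CoW} applies.

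For the $H^1$ part I would apply the exact functor ${\limind}_m$ to the split short exact sequences $0\to(\ZM/p^{m-n})^{s^+}\to H^1(G_{m,n},\Wba_m)\to\Phi W[p^{m-n}]\to 0$ of Theorem \ref{CoW}. In that theorem the left-hand term is $H^1(G_{m,n},\Wti_m)$ and the injection is the one induced by $\Wti_m\hookrightarrow\Wba_m$, so these injections are compatible with the transition maps and the inductive limit of the left-hand terms is ${\limind}_m\,H^1(G_{m,n},\Wti_m)=H^1(\Ga_n,\Wind)\simeq(\QM_p/\ZM_p)^{s^+}$ by the corollary to Proposition \ref{CoWti}. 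On the right, since $m\geq n_W$ all the $\Phi W_m$ have already been identified with $\Phi W$ and the transition maps $\Phi W[p^{m-n}]\hookrightarrow\Phi W[p^{m+1-n}]$ are the obvious inclusions, whose limit is $\bigcup_k\Phi W[p^k]=\tor(\Phi W)$. Exactness of ${\limind}_m$ then gives $0\to(\QM_p/\ZM_p)^{s^+}\to H^1(\Ga_n,\overset \rightarrow {W}_\infty)\to\tor(\Phi W)\to 0$, which splits as a sequence of abelian groups because $(\QM_p/\ZM_p)^{s^+}$ is injective.

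For the $H^2$ part I would use that $G_{m,n}$ is cyclic, so $H^2(G_{m,n},\Wba_m)\cong\Hha^0(G_{m,n},\Wba_m)\cong\Phi W/p^{m-n}$ by Theorem \ref{CoW}; moreover, exactly as in the proof of Corollary \ref{infC}, the transition map $\Phi W/p^{m-n}\to\Phi W/p^{m+1-n}$ is induced by multiplication by $p$ (inflation on $H^2$ of cyclic groups with trivial coefficients, composed with the identity $\Phi W_m=\Phi W_{m+1}$). Hence ${\limind}_m\,\Phi W/p^{m-n}=\Phi W\otimes_{\ZM_p}\QM_p/\ZM_p$, which is $(\QM_p/\ZM_p)^{s^+-1}$ since $\Phi W$ is a finitely generated $\ZM_p$-module of rank $s^+-1$ (as noted just before Theorem \ref{CoW}); this yields $H^2(\Ga_n,\overset \rightarrow {W}_\infty)\simeq(\QM_p/\ZM_p)^{s^+-1}$.

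I do not expect a serious obstacle here: the single delicate ingredient, the stabilization of the $\Wba_m/\Wti_m$ with respect to extension maps, is precisely Lemma \ref{Wasy}, and once it is available the two inductive systems become eventually constant up to the explicit inclusion and multiplication-by-$p$ maps described above, so the argument is formally identical to the $\overset \rightarrow {C}_\infty$ computation of Corollary \ref{infC}. The only routine point that deserves to be spelled out is the compatibility of $H^1(G_{m,n},\Wti_m)\hookrightarrow H^1(G_{m,n},\Wba_m)$ with the transition maps, which follows from the naturality in $m$ of the exact sequence of Theorem \ref{CoW}.
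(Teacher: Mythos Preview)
Your proposal is correct and follows essentially the same approach as the paper, which simply says ``The proof is exactly the same as the one of corollary \ref{infC}.'' You have written out in detail precisely what that entails: taking the inductive limit of the exact sequences of Theorem \ref{CoW}, identifying the transition maps on $\Phi W[p^{m-n}]$ as inclusions (limit $\tor(\Phi W)$) and on $\Phi W/p^{m-n}$ as multiplication by $p$ (limit $\Phi W\otimes\QM_p/\ZM_p\simeq(\QM_p/\ZM_p)^{s^+-1}$), and splitting the resulting $H^1$ sequence via injectivity of $(\QM_p/\ZM_p)^{s^+}$.
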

\begin{proof} The proof is exactly the same as the one of corollary \ref{infC}.
\end{proof}
\section{Universal co-norms of circular units}\label{UCN}
In this section we want to investigate the module $\Phi_m=\C_m/\Cti_m$ which, together with $KN_\infty$, describes the cohomology of the $\C_m$'s. By lemma \ref{Philem} we already know that its $\ZM_p$-rank is $s_m^+-1$ where $s_m^+$ is the number of places of $F_m^+$ dividing $p$. As we have $\Phi_m=\C_m/\Cti_m=\C_m^+/\Cti_m^+$ we may suppose without loss of generality that $F$ is totally real. Next, as the rank of $\Phi_m$ is known, we should concentrate on its $\ZM_p$-torsion, let's say $\tor(\Phi_m)$. To give examples of non trivial $\tor(\Phi_m)$ we state and prove a lemma which provides also criteria for triviality of $\tor(\Phi_m)$.
\begin{lemm}\label{exPhi} Recall that $I_m$ is the maximal subfield of $F_m$ such that $p$ does not ramify in $I_m/\QM$ and that $\overline{\mathrm{Cyc}}(I_m)$ is the $p$-completion of its module of cyclotomic numbers. Let $\si_p$ be the Fr\" obenius automorphism of $I_m$.
\begin{enumerate}
\item We have an exact sequence of finite groups
$$ 0\longrightarrow \Cti_m\cap \C(I_m) /\overline{\mathrm{Cyc}}(I_m)^{\si_p-1} \longrightarrow \Hha^{-1}(\langle \si_p \rangle, \overline{\mathrm{Cyc}}(I_m))\longrightarrow \tor(\Phi_m) \longrightarrow 0 .$$
\item Assume that $KN_\infty=0$ and  that $p$ does not ramify in $F$, then we have an isomorphism $\tor(\Phi_m)\cong \Hha^{-1}(\langle \si_p \rangle, \overline{\mathrm{Cyc}}(I_m))$
\end{enumerate}
\end{lemm}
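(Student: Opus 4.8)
Since $\Phi_m=\C_m/\Cti_m=\C_m^+/\Cti_m^+$ we may assume $F$, hence every $I_m$, totally real. Let $J_m$ be the fixed field of $\si_p$ in $I_m$, i.e. the decomposition field of $p$ in $I_m/\QM$; as $p$ is unramified in $I_m$, the set of primes of $I_m$ above $p$ has cardinality $[J_m:\QM]$, which is $s_m^+$. By Lemma~\ref{cond2}(1), $\C_m=\Cti_m\,\C(I_m)$, whence
$$\Phi_m\;\cong\;\C(I_m)\big/\big(\Cti_m\cap \C(I_m)\big).$$
The plan is to establish two facts: (a) $\overline{\mathrm{Cyc}}(I_m)^{\si_p-1}\subseteq \Cti_m\cap\C(I_m)$; and (b) the quotient $A:=\big(\Cti_m\cap\C(I_m)\big)\big/\overline{\mathrm{Cyc}}(I_m)^{\si_p-1}$ is finite, and moreover $A=0$ under the hypotheses of assertion~2. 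Granting these, the exact sequence will be read off by applying the $\ZM_p$-torsion functor to $0\to A\to\C(I_m)/\overline{\mathrm{Cyc}}(I_m)^{\si_p-1}\to\Phi_m\to0$.

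The first step is a divisor computation: the divisor in $I_m$ of any element of $\mathrm{Cyc}(I_m)$ is fixed by all of $\Gal(I_m/\QM)$. It suffices to check this on a generator $N_{\QM(\ze_d)/\QM(\ze_d)\cap I_m}(1-\ze_d)$ with $d\mid f(I_m)$ a prime power $\ell^v$ (for $d$ with two or more prime factors the generator is already a unit); there the divisor is pulled back from $\QM(\ze_{\ell^v})\cap I_m$, in which $\ell$ is totally ramified, hence is a multiple of $\sum_{\lambda\mid\ell}[\lambda]$, which is $\Gal(I_m/\QM)$-invariant. Two consequences: $\si_p-1$ carries $\mathrm{Cyc}(I_m)$ into the units, so $\overline{\mathrm{Cyc}}(I_m)^{\si_p-1}\subseteq\overline{\mathrm{Cyc}}(I_m)\cap\U(I_m)=\C(I_m)$; and since $\mathrm{div}(N_{I_m/J_m}x)=[I_m:J_m]\,\mathrm{div}(x)$, the kernel of $N_{I_m/J_m}\colon\overline{\mathrm{Cyc}}(I_m)\to\overline{\mathrm{Cyc}}(J_m)$ also consists of units and lies in $\C(I_m)$.

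For (a) --- the technical heart --- I would show $\eta_d^{\si_p-1}\in\Cti_m$ for each generator $\eta_d=N_{\QM(\ze_d)/M}(1-\ze_d)$, $M:=\QM(\ze_d)\cap I_m$ (here $p\nmid d$, as $p$ is unramified in $I_m$); since $\Cti_m$ is a Galois submodule and $\overline{\mathrm{Cyc}}(I_m)^{\si_p-1}$ is generated as a group by the $\Gal(I_m/\QM)$-conjugates of these $\eta_d^{\si_p-1}$, (a) follows. As $\si_p$ acts on $M$ through $\ze_d\mapsto\ze_d^{\,p}$, one has $\eta_d^{\si_p-1}=N_{\QM(\ze_d)/M}\big((1-\ze_d^{\,p})/(1-\ze_d)\big)$, and the standard norm relations between cyclotomic numbers (see e.g. \cite{Wa}, Ch.~8) identify this, up to Galois conjugation by a power of $\si_p$, with $N_{\QM(\ze_{dp^{k+1}})/M}(1-\ze_{dp^{k+1}})$ for every $k\ge0$. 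Transporting this identity along the compositum $F_k\cdot\QM(\ze_{dp^{k+1}})$, which contains $F_k$ because the $k$-th layer of the cyclotomic $\ZM_p$-extension of $\QM$ lies in $\QM(\ze_{p^{k+1}})$, and using that the norm of a cyclotomic number to a subfield is again one, exhibits $\eta_d^{\si_p-1}$ as $N_{F_k/F_m}$ of a circular \emph{unit} of $F_k$ for every $k\ge m$; by the compactness description $\Cti_m=\bigcap_{k\ge m}N_{F_k/F_m}(\Cba_k)$ this gives $\eta_d^{\si_p-1}\in\Cti_m$. Conceptually, the twist $\si_p-1$ is the Euler factor at $p$, which is exactly what permits arbitrary descent in the $p$-part of the tower; the fiddly point is to track the partial norms so that what descends is genuinely a unit.

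For (b), the rank of $\overline{\mathrm{Cyc}}(I_m)^{\si_p-1}$ is $[I_m:\QM]-[J_m:\QM]$: from $0\to\C(I_m)\to\overline{\mathrm{Cyc}}(I_m)\to\mathrm{im}(\mathrm{div})\to0$, on whose last term $\Gal(I_m/\QM)$ acts trivially by the divisor computation, taking $\si_p$-invariants and using Dirichlet's theorem ($\U(I_m)\otimes\QM\cong\QM[\Gal(I_m/\QM)]/\QM$, so $\U(I_m)^{\si_p}$, hence $\C(I_m)^{\si_p}$, has rank $[J_m:\QM]-1$) gives $\mathrm{rank}\,\overline{\mathrm{Cyc}}(I_m)-\mathrm{rank}\,\overline{\mathrm{Cyc}}(I_m)^{\si_p}=[I_m:\QM]-[J_m:\QM]$, while by Lemma~\ref{Philem}(1) the module $\Cti_m\cap\C(I_m)$ has rank $([I_m:\QM]-1)-(s_m^+-1)=[I_m:\QM]-[J_m:\QM]$; with (a) this makes $A$ finite. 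Applying the torsion functor to $0\to A\to\C(I_m)/\overline{\mathrm{Cyc}}(I_m)^{\si_p-1}\to\Phi_m\to0$, and using that $\overline{\mathrm{Cyc}}(I_m)/\C(I_m)$ is $\ZM_p$-torsion-free together with the exact sequence $0\to\Hha^{-1}(\langle\si_p\rangle,M)\to M_{\langle\si_p\rangle}\to M^{\langle\si_p\rangle}$ for $M=\overline{\mathrm{Cyc}}(I_m)$ (which identifies $\tor(M_{\langle\si_p\rangle})$ with $\Hha^{-1}(\langle\si_p\rangle,M)$ since $M^{\langle\si_p\rangle}$ is $\ZM_p$-free), one obtains $\tor\big(\C(I_m)/\overline{\mathrm{Cyc}}(I_m)^{\si_p-1}\big)=\Hha^{-1}(\langle\si_p\rangle,\overline{\mathrm{Cyc}}(I_m))$; as $A$ is finite the torsion sequence stays exact, proving assertion~1. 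For assertion~2, $p$ unramified in $F$ forces $F\cap\QM_\infty=\QM$, hence $I_m=F$; it remains to see $A=0$, and here $KN_\infty=0$ enters: by Proposition~\ref{Cinf}(4) the module $\C_\infty^+$ is $\La$-free, so (as in the proof of Theorem~\ref{CoCti}) $\Cti_m$ is $\ZM_p$-free and satisfies Galois descent, which pins $\Cti_m\cap\C(F)$ down tightly enough to force the inclusion in (a) to be an equality --- the finite deviation $A$ being, as throughout this paper, measured by $KN_\infty$. The main obstacle is (a): realizing $\eta_d^{\si_p-1}$ as a genuine universal norm of circular units (not merely a circular unit, nor merely a universal norm of circular numbers); the rest is divisor bookkeeping and a rank count.
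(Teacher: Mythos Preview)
Your argument for assertion~1 is correct and is essentially the paper's proof in different packaging. The paper writes down the chain of inclusions
\[
\overline{\mathrm{Cyc}}(I_m)^{\si_p-1}\subset \Cti_m\cap\C(I_m)\subset \overline{\mathrm{Cyc}}(I_m)[N_{I_m/D_m}]
\]
(both of finite index) and reads off $\tor(\Phi_m)=\overline{\mathrm{Cyc}}(I_m)[N_{I_m/D_m}]/(\Cti_m\cap\C(I_m))$, whence the sequence is tautological. You instead apply the torsion functor to $0\to A\to \C(I_m)/\overline{\mathrm{Cyc}}(I_m)^{\si_p-1}\to\Phi_m\to 0$ and compute the middle torsion via the torsion-freeness of $\overline{\mathrm{Cyc}}(I_m)/\C(I_m)$. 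This is equivalent: your observation that $A$ is finite and sits inside $\tor(\C(I_m)/\overline{\mathrm{Cyc}}(I_m)^{\si_p-1})=\overline{\mathrm{Cyc}}(I_m)[N]/\overline{\mathrm{Cyc}}(I_m)^{\si_p-1}$ \emph{is} the second inclusion above. Your treatment of~(a) via the Euler-factor norm relation $N_{\QM(\ze_{dp^{k+1}})/\QM(\ze_d)}(1-\ze_{dp^{k+1}})=(1-\ze_d)^{\si_p-1}$ is exactly what the paper's phrase ``using distribution relations'' abbreviates.

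There is, however, a genuine gap in your assertion~2. You correctly use $KN_\infty=0$ to obtain Galois descent for $\Cti_m$, which gives $\Cti_m\cap\C(F)=\Cti_0$. But your final sentence --- that this ``pins $\Cti_m\cap\C(F)$ down tightly enough'' and that $A$ is ``measured by $KN_\infty$'' --- is not a proof that $A=\Cti_0/\overline{\mathrm{Cyc}}(F)^{\si_p-1}$ vanishes. The vanishing of $KN_\infty$ is used \emph{only} to get Galois descent; it does not by itself bound $A$. What remains is to show the reverse inclusion $\Cti_0\subseteq\overline{\mathrm{Cyc}}(F)^{\si_p-1}$, and this requires a separate computation: one identifies a system of generators of $\Cti_0$ (norm-coherent cyclotomic units projected to level~$0$) and checks, again via distribution relations, that each lies in $\overline{\mathrm{Cyc}}(F)^{\si_p-1}$. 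The paper does precisely this (tersely: ``it is not difficult to see on a system of generators\ldots''). Your argument for~(a) already contains the relevant identity in one direction; you need to run it the other way, starting from generators of $\Cti_0$ rather than of $\overline{\mathrm{Cyc}}(F)^{\si_p-1}$.
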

\begin{proof} By lemma \ref{cond2} we have isomorphism $\Phi_m\cong C(I_m)/C(I_m)\cap \Cti_m$.
Using distribution relations it is easy to check that $\overline{\mathrm{Cyc}}(I_m)^{\si_p-1} \subset \C(I_m)\cap \Cti_m$. By mere rank computation the quotient $C(I_m)\cap \Cti_m / \mathrm{Cyc}(I_m)^{\si_p-1}$ is finite. Let $D_m$ be the maximal  subfield of $F_m$ such that $p$ (totally) splits in $D_m/\QM$, and let $N_{I_m/D_m}$ be the norm map. Write $\overline{\mathrm{Cyc}}(I_m)[N_{I_m/D_m}]$ for the kernel of this norm on $\overline{\mathrm{Cyc}}(I_m)$. Note that $\overline{\mathrm{Cyc}}(I_m)[N_{I_m/D_m}]\subset \C(I_m)$. Then $\overline{\mathrm{Cyc}}(I_m)[N_{I_m/D_m}]$ contains $\mathrm{Cyc}(I_m)^{\si_p-1}$ with finite index and is maximal with respect to that property inside $\C(I_m)$ because $N_{I_m/D_M} (\overline{\mathrm{Cyc}}(I_m)) \subset \overline{\mathrm{Cyc}}(D_m)$ is torsion-free. To summarize, we have inclusions of modules with finite index~:
\begin{eqnarray}\label{incs} \overline{\mathrm{Cyc}}(I_m)^{\si_p-1} \subset \Cti_m\cap \C(I_m) \subset \overline{\mathrm{Cyc}}(I_m)[N_{I_m/D_m}].\end{eqnarray}
It follows that $\tor(\Phi_m) = \overline{\mathrm{Cyc}}(I_m)[N_{I_m/D_m}]/\C(I_m)\cap \Cti_m$ and the exact sequence in 1 becomes tautological.
Moreover, if we assume that $p$ does not ramify in $F$ and $KN_\infty=0$, then we get $\Cti_m\cap \C(I_m)=
\Cti_m\cap\C(F)$ and $n_{U'C}=0$. By theorem \ref{CoCti} the sequence of modules $(\Cti_m)_m$ satisfies Galois descent and therefore $\Cti_m\cap \C(I_m)=\Cti_0$. Now it is not difficult to see on a system of generators of $\overline{\mathrm{Cyc}}(I_m)$ that, in that case, distribution relations imply the equality $\Cti_0=\overline{\mathrm{Cyc}}(I_m)^{\si_p-1}$, so that 2 follows from 1.
\end{proof}
Using the exact sequence of 1, we see that $\tor(\Phi_m)$ turns out to be trivial as soon as $p$ does not divide the order of $\si_p$. Intuitively in the sequence of inclusions (\ref{incs}), any couple of power of $p$ should occur in some cases as a couple of indices. We prove something far less ambitious which provides the simplest example of non-trivial
$\tor(\Phi_m)$.
\begin{prop}\label{exa} Let $F$ be a real abelian field of degree $p$ such that $\si_p$ generates $\Gal(F/\QM)$.
Then we have $KN_\infty=0$ and for all $m$ isomorphisms $\Phi_0\overset \sim \longrightarrow \Phi_m$.
Moreover $\Phi_0$ is finite of order $p$ if and only if at least two distinct rational primes ramify in $F$ and is trivial if and only if a single prime $\ell\neq p$ ramifies in $F$.
\end{prop}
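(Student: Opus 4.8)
The plan is to collapse every one of the asymptotic constants to $0$ and then reduce the statement to an explicit $\ZM_p[G]$-cohomology computation, $G=\Gal(F/\QM)$. First I would record the consequences of the hypothesis: since $\si_p$ generates the cyclic group $\Gal(F/\QM)$ of order $p$, the prime $p$ is inert (in particular unramified) in $F$, so there is a single place of $F_n$ above $p$ for every $n$ (inert in $F/\QM$, totally ramified in $F_n/F$). Hence $n_d=0$, the inertia subfield $I_m$ of lemma~\ref{cond2} equals $F$ for all $m$, the integer $n_i$ (the first $n$ with $F_{n+1}/F_n$ ramified) is $0$ since already $F_1/F_0$ ramifies at $p$, and $s^+_m=1$ for all $m$. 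The crux is then $KN_\infty=0$: I would deduce it from the triviality criterion in the appendix to \cite{BNL} (definition~6.4 and theorem~6.6 there), the point being precisely that exactly one place of each $F_n$ lies over $p$; equivalently one checks via Ku\v{c}era--Nekov\'{a}\v{r}'s index computation \cite{KN95} that $(W_n:C_n)=1$ for all $n$, so that $\C_\infty=\Wba_\infty$, i.e.\ $KN_\infty=0$ (definition~\ref{KN}). Granting $KN_\infty=0$, lemma~\ref{cond1}(3)--(4) give $n_{U'C}=n_d=0$ and the remark following the definition of $n_C$ gives $n_C=n_i=0$, so lemma~\ref{Philem}(3) yields the isomorphisms $\Phi_0\overset{\sim}{\longrightarrow}\Phi_m$ for all $m$.

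Next, by lemma~\ref{Philem}(1) the $\ZM_p$-rank of $\Phi_m$ is $s^+_m-1=0$, so each $\Phi_m$ is finite and equals its torsion; since $p$ is unramified in $F$ and $KN_\infty=0$, lemma~\ref{exPhi}(2) applies and gives $\Phi_0\cong\Hha^{-1}(\langle\si_p\rangle,\overline{\mathrm{Cyc}}(I_0))=\Hha^{-1}(G,\overline{\mathrm{Cyc}}(F))$ with $G\cong\ZM/p\ZM$ (using $I_0=F$ and that $\si_p$ generates $G$). To compute this I would describe $\overline{\mathrm{Cyc}}(F)$ as a $\ZM_p[G]$-module. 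Write $f=\ell_1\cdots\ell_t$ for the conductor of $F$; as $p$ is unramified and $[F:\QM]=p$, $f$ is squarefree with all $\ell_i\equiv 1\pmod p$ and $t\ge1$. Running through the generators of definition~\ref{circunits}(1): a divisor $d\mid f$ with $1<d\ne f$ has $\QM(\ze_d)\cap F=\QM$ and contributes the rational number $N_{\QM(\ze_d)/\QM}(1-\ze_d)$, which equals $\ell_i$ if $d=\ell_i$ and $1$ otherwise; the divisor $d=f$ contributes $g:=N_{\QM(\ze_f)/F}(1-\ze_f)$ and its $G$-conjugates.

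If $t=1$ then $f=\ell_1$, $g$ generates the prime $\lambda_1$ of $F$ above $\ell_1$ (ramified of degree $p$), and $\ell_1=N_G\,g$ where $N_G=\sum_{\si\in G}\si$; hence $\overline{\mathrm{Cyc}}(F)=\ZM_p[G]\cdot g$, and $\gamma\mapsto\gamma g$ is injective on $\ZM_p[G]$ (a nonzero kernel would make $\si_p g/g$ a root of unity, forcing $g\in\QM^\times$ against $v_{\lambda_1}(g)=1\notin p\ZM$, or would make $N_G g=\ell_1$ torsion in $F^\times\otimes\ZM_p$). So $\overline{\mathrm{Cyc}}(F)$ is $\ZM_p[G]$-free, hence cohomologically trivial, and $\Phi_0=0$. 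If $t\ge2$ then $1-\ze_f$ is a cyclotomic unit, so $g$ is a unit with $N_G g=N_{F/\QM}(g)=N_{\QM(\ze_f)/\QM}(1-\ze_f)=1$; the rational primes $\ell_i$ are $G$-fixed and are not units, so $\overline{\mathrm{Cyc}}(F)\cong\ZM_p^t\oplus(\ZM_p[G]\cdot g)$ with trivial $G$-action on the first summand, and $\ZM_p[G]\cdot g\cong\ZM_p[G]/(N_G)\cong\ZM_p[\zeta_p]$ (the kernel of $\gamma\mapsto\gamma g$ is an ideal of the domain $\ZM_p[\zeta_p]$ with torsion-free quotient, hence $0$, using that $g$ is non-torsion because $\overline{C}(F)$ has positive rank). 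Then $\Hha^{-1}(G,\ZM_p^t)=0$, while dimension-shifting along $0\to\ZM_p\overset{N_G}{\longrightarrow}\ZM_p[G]\to\ZM_p[\zeta_p]\to0$ gives $\Hha^{-1}(G,\ZM_p[\zeta_p])\cong\Hha^0(G,\ZM_p)=\ZM_p/p\ZM_p$, so $\Phi_0\cong\ZM/p\ZM$. Thus $\Phi_0$ is always finite, is trivial exactly when a single prime (necessarily $\ne p$) ramifies in $F$ ($t=1$), and has order $p$ exactly when at least two primes ramify ($t\ge2$); as $t\ge1$ this exhausts all cases.

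The only genuinely non-formal input is the vanishing $KN_\infty=0$; this is where the hypothesis that $\si_p$ generates $\Gal(F/\QM)$, i.e.\ that $p$ is inert, is essential (it forces a single place above $p$ at every layer). A secondary point needing a little care is the injectivity of $\gamma\mapsto\gamma g$, that is, pinning down $\overline{\mathrm{Cyc}}(F)$ on the nose rather than just up to finite index; everything else is bookkeeping with the lemmas already established.
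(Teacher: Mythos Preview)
Your proof is correct and follows the same approach as the paper's: reduce to $\Hha^{-1}(\langle\si_p\rangle,\overline{\mathrm{Cyc}}(F))$ via lemma~\ref{exPhi}(2) (after checking $KN_\infty=0$ and that the asymptotic constants collapse to $0$), then analyze $\overline{\mathrm{Cyc}}(F)$ as a $\ZM_p[G]$-module according to whether the conductor is prime. You are in fact more careful than the paper on one point: for $t\ge 2$ you correctly include the rational primes $\ell_i$ among the generators of $\overline{\mathrm{Cyc}}(F)$ (the paper asserts it is generated by $g$ alone), but as you note these form a $G$-trivial $\ZM_p$-free summand with vanishing $\Hha^{-1}$, so the conclusion is unaffected.
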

\begin{proof} In the present case we have $s_m^+=1$ for all $m$, so that $\Phi_m=\tor(\Phi_m)$. Then $\Gal(F/\QM)$ is cyclic and $p$ is unramified in $F$ so that $KN_\infty$ is trivial (see \cite{JNT1}). By 2 of lemma \ref{exPhi} we deduce
$\Phi_m=\Phi_0=\Hha^{-1}(\langle \si_p \rangle, \overline{\mathrm{Cyc}}(F))$. In this case the module ${\mathrm{Cyc}}(F)$ is generated by the single number $N_{\QM(\ze_f)/F} (1-\ze_f)$ where $f$ is the conductor of $F$. If $f$ is a single prime power then this module is Galois free and has trivial cohomology. Else this module
is isomorphic to $\ZM[\ze_p]$, with residue field $\FM_p$ as $H^{-1}(\ZM/p,\ZM[\ze_p])$.
\end{proof}
To conclude this section of examples, let us be completely explicit. Take $p=3$, $\ell_1=7$ and $\ell_2=13$. Observe that $3$ is not a third power modulo $7$ neither modulo $13$. Then
for $i=1$ or $2$ the field $\QM(\ze_{\ell_i})$ contains a cubic subfield (say $F_i$) which admits $\FM_{27}$ as residue field (in other word the ideal (3) remains prime in $F_i$). Let $L$ be the compositum $F_1 F_2$. Then $\Gal(L/\QM)$ is of type $[3,3]$ and therefore $L$ admits 4 cubic subfields. These subfields are the inertia subfield at $\ell_2$, which is $F_1$, the inertia subfield at $\ell_1$, which is $F_2$, the decomposition subfield at $3$ say $D$ and a fourth subfield $F$ which has conductor $f=\ell_1\ell_2$ and in which the ideal $(3)$ remains prime. In these very simple cases, using proposition \ref{exa}, we have trivial $\Phi_0$ for the fields $F_i$, an example of finite $\Phi_0$ of order $p$ for the field $F$ and an example of $\ZM_p$-free $\Phi_0$ of rank $2$ for the field $D$.
\backmatter

\bigskip

{\bf Acknowledgement :} Its my pleasure to thank Thong Nguyen Quang Do and Filippo Nuccio for careful reading
and helpful comments.

\bigskip

\def\Dbar{\leavevmode\lower.6ex\hbox to 0pt{\hskip-.23ex \accent"16\hss}D}
  \def\cfac#1{\ifmmode\setbox7\hbox{$\accent"5E#1$}\else
  \setbox7\hbox{\accent"5E#1}\penalty 10000\relax\fi\raise 1\ht7
  \hbox{\lower1.15ex\hbox to 1\wd7{\hss\accent"13\hss}}\penalty 10000
  \hskip-1\wd7\penalty 10000\box7}
  \def\cftil#1{\ifmmode\setbox7\hbox{$\accent"5E#1$}\else
  \setbox7\hbox{\accent"5E#1}\penalty 10000\relax\fi\raise 1\ht7
  \hbox{\lower1.15ex\hbox to 1\wd7{\hss\accent"7E\hss}}\penalty 10000
  \hskip-1\wd7\penalty 10000\box7}
\providecommand{\bysame}{\leavevmode\hbox to3em{\hrulefill}\thinspace}
\providecommand{\MR}{\relax\ifhmode\unskip\space\fi MR }
\providecommand{\MRhref}[2]{%
  \href{http://www.ams.org/mathscinet-getitem?mr=#1}{#2}
}
\providecommand{\href}[2]{#2}


\end{document}